\renewcommand\vec[1]{\overrightarrow{#1}}
\newcommand\cev[1]{\overleftarrow{#1}}
\newcommand{\bk}{\bold{m}}
\newtheorem{theorem}{Theorem}[section]
\newtheorem{prop}[theorem]{Proposition}
\newtheorem{lemma}[theorem]{Lemma}
\theoremstyle{remark}
\newtheorem{remark}{Remark}
\newtheorem{example}{Example}
\newtheorem{definition}[theorem]{Definition}
\begin{document}

\title{Two dualities: Markov and Schur--Weyl}

\author{Jeffrey Kuan}

\date{}

\maketitle

\abstract{We show that quantum Schur--Weyl duality leads to Markov duality for a variety of asymmetric interacting particle systems. In particular, we consider three cases:

(1) Using a Schur--Weyl duality between a two--parameter quantum group and a two--parameter Hecke algebra from \cite{BW}, we recover the Markov self--duality of multi--species ASEP previously discovered in \cite{KIMRN} and \cite{BS3}.

(2) From a Schur--Weyl duality between a co--ideal subalgebra of a quantum group and a Hecke algebra of type B \cite{BWW}, we find a Markov duality for a multi--species open ASEP on the semi--infinite line. The duality functional has not previously appeared in the literature.

(3) A ``fused'' Hecke algebra from \cite{crampe2020fused} leads to a new process, which we call \textit{braided ASEP}.  In braided ASEP, up to $m$ particles may occupy a site and up to $m$ particles may jump at a time. The Schur--Weyl duality between this Hecke algebra and a quantum group lead to a Markov duality. The duality function had previously appeared as the duality function of the multi--species ASEP$(q,m/2)$ \cite{KIMRN} and the stochastic multi--species higher spin vertex model \cite{KuanCMP}.

}

\section{Statement of Results}

\subsection{Description of models}

\subsubsection{Multi--species ASEP}
We consider the multi--species ASEP with $n$ species of particles, labeled by $\{1,2,\ldots,n\}$. The particles occupy either a finite lattice with closed boundary conditions, or the infinite line $\mathbb{Z}$. At most one particle may occupy a site. The jump rates are described by (for $k>l$)
\begin{align*}
&k \ l \rightarrow l \ k \text{ with rate } q\\
&l \ k \rightarrow k \ l \text{ with rate } 1
\end{align*}
This means that particles with larger indices will drift left.

For any lattice site $x$, any integer $ j \in \{1,\ldots,n\}j$ and any particle configuration $\eta$, define
$$
\vec{N}_x^j (\eta)
$$
to be the number of particles of species $j$ in particle configuration $\eta$ that are located strictly to the right of $x$. Similarly, define
$$
\cev{N}_x^j(\eta)
$$
to be the number of particles of species $j$ in particle configuration $\eta$ that are located strictly to the \textit{left} of $x$.

For any particle configuration $\eta$, let $A_r(\eta)$ denote the set of lattice sites $x$ that contain a species $r$ particle.

\subsubsection{Multi--species open ASEP}
Again onsider an ASEP with multiple species of particles. This time, let the particles will be labeled by integers $\{-r,\ldots,-1,1\ldots,r\}$, while holes will be labeled by a ``0''. The particles occupy the lattice $\{0,1,2,\ldots L\}$ for some $L \in \mathbb{N}\cup\{\infty\}$ and at most one particle may occupy a site. In the bulk of the system, the jump rates are again described by (for $k>l$)
\begin{align*}
&k \ l \rightarrow l \ k \text{ with rate } q\\
&l \ k \rightarrow k \ l \text{ with rate } 1
\end{align*}
where $q \in (0,1)$. In words, this means that particles with a larger index will drift left. At the origin, the jump rates are described by (for $k>0$)
\begin{align*}
&k \rightarrow -k \text{ with rate } 1
&-k \rightarrow k \text{ with rate } Q
\end{align*}
where $Q\in (0,1)$. So particles with a positive index will generally leave the lattice over time and be replaced by their negatively indexed counterparts. %Denote this generator by $L_B^{\text{left}}$. 

Let
$$
\vec{N}_x^j (\eta),
\cev{N}_x^j(\eta),
A_k(\eta)
$$
have the same meanings as before. 

Note that this process previously appeared in \cite{bufetov2020interacting}.

\subsubsection{Braided ASEP}
The state space of braided ASEP consists of particle configurations on a one--dimensional lattice. Each lattice site may contain up to $m$ particles. To define the jump rates, first define some notation: the $q$--deformed integers, factorials, binomials and Pochhamer are defined by
$$
[n]_{q^2} =  \frac{1- q^{2n}}{1-q^2}, \quad \quad [n]_{q^2}^! = \{1\}_{q^2} \cdots \{n\}_{q^2}, \quad \quad \binom{n}{k}_{q^2} = \frac{[n]_{q^2}^!}{[k]_{q^2}^! [n-k]_{q^2}^!}.
\quad \quad (a;q)_n = (1-a)(1-aq)\cdots ( 1-aq^{n-1}).
$$
By convention, $[0]_q^!$ and $(a;q)_0$ both equal $1$, and $\binom{n}{k}_q=0$ if $k>n$. The total particle number is conserved, meaning that no particles are created or annihilated. The local jump rates are given by 

\begin{center}
\begin{tikzpicture}[scale=1, every text node part/.style={align=center}]
\usetikzlibrary{arrows}
\usetikzlibrary{shapes}
\usetikzlibrary{shapes.multipart}
\tikzstyle{arrow}=[->,>=stealth,thick,rounded corners=4pt]

\draw (0.33,0)--(1.66,0);
\draw (0.33,0)--(0.33,0.66);
\draw (1,0)--(1,0.66);
\draw (1.66,0)--(1.66,0.66);

\draw[fill=black] (0.66,0.33) circle (8pt);
\draw[fill=black] (0.66,1) circle (8pt);
\draw[fill=black] (0.66,1.66) circle (8pt);
\draw[fill=black] (1.33,0.33) circle (8pt);
\draw [decorate,decoration={brace,amplitude=10pt},xshift=-4pt,yshift=0pt]
(0,0) -- (0,2.0) node [black,midway,xshift=-0.6cm] 
{\footnotesize $k_1$};
\draw [decorate,decoration={brace,amplitude=10pt,mirror,raise=4pt},yshift=0pt]
(2,0) -- (2,2.0) node [black,midway,xshift=0.8cm] {\footnotesize
$k_2$};

\node at (4,1) {$\longrightarrow$};

\draw (6.33,0)--(7.66,0);
\draw (6.33,0)--(6.33,0.66);
\draw (7,0)--(7,.66);
\draw (7.66,0)--(7.66,0.66);

\draw[fill=black] (6.66,0.33) circle (8pt);
\draw[fill=black] (6.66,1) circle (8pt);
\draw[fill=black] (7.33,1) circle (8pt);
\draw[fill=black] (7.33,0.33) circle (8pt);
\draw [decorate,decoration={brace,amplitude=10pt},xshift=-4pt,yshift=0pt]
(6,0) -- (6,2.0) node [black,midway,xshift=-0.6cm] 
{\footnotesize $l_1$};
\draw [decorate,decoration={brace,amplitude=10pt,mirror,raise=4pt},yshift=0pt]
(8,0) -- (8,2.0) node [black,midway,xshift=0.8cm] {\footnotesize
$l_2$};

\end{tikzpicture}
\end{center}
at rate $$\binom{k_1}{l_2}_q (q^{2(m-k_2)};q^{-2})_{k_1-l_2} q^{2(m-k_2-k_1+l_2)l_2}  .$$ In this example, $k_1=3,k_2=1,l_1=l_2=2$, so this jump occurs at rate $(q^{-2} + 1 + q^2)(1 - q^{2(m-1)})q^{4(m-1)}q^2.$ Note that the jump rate is nonzero if and only if $l_2 \leq k_1$, and multiple particles may jump at a time. The particles will generally drift to the left over time. If $q=1$, the jump rate is only nonzero if $k_1=l_2$; so the only permissible jump is that all the particles at sites $x$ and $x+1$ switch places at rate $1$. This generalizes the usual SSEP, albeit in a somewhat trivial manner.

Let $\eta$ denote a particle configuration, where $\eta(x) \in \{0,1,\ldots,m\}$ denotes the number of particles at lattice site $x$. This should not be confused with the multi--species models, where $\eta(x)$ indicates the index of the particle (or hole) located at site $x$.

\subsection{Markov duality}

\begin{definition}
Two Markov processes $X(t)$ and $Y(t)$ on state spaces $\mathfrak{X}$ and $\mathfrak{Y}$ are \textit{Markov dual} with respect to a function $D(x,y)$ on $\mathfrak{X}\times \mathfrak{Y}$ if
$$
\mathbb{E}_x[D(X(t),y)] = \mathbb{E}_y[D(x,Y(t))]
$$ 
for all $t\geq 0$ and all $x\in \mathfrak{X}, y \in \mathfrak{Y}$. For discrete state spaces, there is an equivalent definition involving the generators. Let $L_X$ and $L_Y$ be the generators of $X(t)$ and $Y(t)$, viewed as $\mathfrak{X} \times \mathfrak{X}$ and $\mathfrak{Y} \times \mathfrak{Y}$ matrices, and view $D(x,y)$ as a $\mathfrak{X} \times \mathfrak{Y}$ matrix. Then $X(t)$ and $Y(t)$ are \textit{Markov dual} if 
$$
L_XD = DL_Y^T
$$
where the $^T$ denotes matrix transposition.
\end{definition}

\subsection{Main Theorems}

\subsubsection{Multi--species ASEP}
Define the function
\begin{equation}\label{MSch}
D(\eta,\xi) = \prod_{k=1}^n  \prod_{x \in A_k(\xi)} 1_{\eta(x) \geq \xi(x)} q^{ -2x - \sum_{j=k}^n 2\vec{N}_x^{j}(x) }
\end{equation}
\begin{theorem}\label{Sch}
The multi--species ASEP is self--dual with respect to the function in \eqref{MSch}.
\end{theorem}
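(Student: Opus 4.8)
The plan is to deduce the generator identity $L_X D = D L_X^T$ of the Markov-duality definition from the quantum Schur--Weyl duality of \cite{BW}, rather than by a brute-force computation. First I would record that the generator of multi--species ASEP decomposes as $L_X = \sum_{x} L_{x,x+1}$, a sum of operators each supported on two neighboring sites $x,x+1$ and acting on $V^{\otimes L}$ with $V=\mathbb{C}^{n+1}$ the local state space (an empty site, or one of the $n$ species). Conjugating by the diagonal ``ground--state'' matrix $G$ whose weights carry the factors $q^{-2x}$ appearing in \eqref{MSch} turns $L_X$ into an affine normalization of an XXZ--type Hecke Hamiltonian $\mathcal{H}=\sum_x \mathcal{H}_{x,x+1}$, where each $\mathcal{H}_{x,x+1}$ is the image of a Hecke generator $T_x$ under the representation of the Hecke algebra $H_L$ on $V^{\otimes L}$; the asymmetry ($q$ versus $1$) of the bulk rates is exactly what is absorbed into $G$.

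Next I would invoke the two--parameter Schur--Weyl duality of \cite{BW}: the image of $H_L$ on $V^{\otimes L}$ is the commutant $\mathrm{End}_U(V^{\otimes L})$ of the $L$--fold coproduct action $\pi$ of the two--parameter quantum group $U$. Since $\mathcal{H}$ lies in the image of $H_L$, it commutes with $\pi(u)$ for every $u\in U$. The duality function is then to be extracted from one well--chosen element: I would take $u$ to be a ``triangular'' element built from the Cartan torus and the lowering generators of $U$ and set $S:=\pi(u)$, so that the matrix entries of $S$ in the standard basis of $V^{\otimes L}$, after undoing the conjugation by $G$, reproduce exactly the product in \eqref{MSch} --- the indicators $1_{\eta(x)\geq\xi(x)}$ coming from the lowering operators, and the exponents $-\sum_{j=k}^{n}2\vec{N}_x^{j}(\eta)$ from the $q$--Cartan weights collected along the iterated coproduct. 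Combining $[\mathcal{H},S]=0$ with the relation between $L_X$ and $L_X^{T}$ furnished by the conjugation $G$ together with the bar--involution (equivalently, the antipode) of $U$, a short rearrangement of $\mathcal{H}S=S\mathcal{H}$ yields $L_X D = D L_X^{T}$.

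I expect the main obstacle to be the explicit identification of $\pi(u)$ with the combinatorial functional \eqref{MSch}: expanding the iterated coproduct of a product of lowering generators weighted by Cartan elements and checking that the bookkeeping collapses to ``product over $x\in A_k(\xi)$ of an indicator times a power of $q^{2}$ counting same--or--higher--species particles strictly to the right of $x$''. Getting the two--parameter normalization and the ordering of the non--commuting factors right is the delicate point. Secondary issues are the boundary behavior --- verifying that the closed (reflecting) conditions at the two ends of the finite lattice are compatible with the chosen $S$ --- and the passage to the line $\mathbb{Z}$, which follows by locality, since each entry of $L_X D - D L_X^{T}$ depends on finitely many coordinates and $D(\eta,\xi)$ is a finite product once $\xi$ ranges over configurations with finitely many particles. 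As a consistency check I would also verify \eqref{MSch} directly against the three elementary local moves ($k\,l\to l\,k$, $l\,k\to k\,l$, and the null move), which reduces the whole statement to an elementary identity among powers of $q$.
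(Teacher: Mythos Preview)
Your overall route coincides with the paper's: both pull the duality out of the two--parameter Schur--Weyl duality of \cite{BW} by taking a quantum--group symmetry $S$ (built from lowering generators via the iterated coproduct) that commutes with the Hecke Hamiltonian, and then matching its matrix entries, after a diagonal conjugation, with the product \eqref{MSch}. The paper carries out the matching only for $n=1$, using the pseudo--factorization property of \cite{CGRS} rather than a raw coproduct expansion, but the substance is the same as what you propose.

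The one genuinely loose step in your plan is the passage from $\mathcal H S = S\mathcal H$ to $L_X D = D L_X^{T}$. You attribute the appearance of the transpose to ``the bar--involution (equivalently, the antipode) of $U$''. These two operations are not equivalent, and in any case neither is what produces the transpose of the Markov generator here. In the paper's framework (Section~\ref{SETUP}) the transpose enters through a separate diagonal matrix $B$ for which $B^{-1}HB$ is self--adjoint --- equivalently, through the reversible measure of the chain --- and the duality function is $D = G^{-1}SG^{-1}B^{2}$, with $G$ determined by the coefficients of $S^{d}\Omega$ (so $G$ already carries the $\vec N$--count powers of $q$, not only the position weights $q^{-2x}$ you single out). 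As written, your ``short rearrangement'' is a gap: either invoke the $D=G^{-1}SG^{-1}B^{2}$ mechanism explicitly, or promote the direct three--local--moves check you mention at the end from a consistency check to the actual proof.
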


This result is not new; it had been previously problem in \cite{KIMRN} and \cite{BS3}; see also \cite{KuanJPhysA} and \cite{BS,BS2} for the $n=2$ case. The single--species ($n=1$) case goes back to \cite{Sch97}. We state this result here because the proof is new, and it gives context for the duality result for the multi--species open ASEP. Note that up to multiplicative constants, 
$$
D(\eta,\xi) = \mathrm{const} \cdot \prod_{k=1}^n  \prod_{x \in A_k(\xi)} 1_{\eta(x) \geq \xi(x)} q^{ -2x + \sum_{j=k}^n 2\cev{N}_x^{j}(x) }
$$

\subsubsection{Multi--species open ASEP}
\begin{theorem}\label{OPEN}
For $r=1$, the open ASEP is self--dual with respect to the function 
\begin{multline*}
D(\eta,\xi) = 
1_{\{A_1(\xi) \subseteq A_1(\eta), A_{-1}(\xi)\subseteq A_{-1}(\eta)\}}  
 \prod_{x \in A_1(\xi)} (Q^{-2}q^{-2x-2\vec{N}_x^1(\eta)})  
\prod_{y \in A_{-1}(\xi)} q^{2y + 2\cev{N}_y^{-1}(\eta)} \\
\times q^{-2 \vert A_{-1}(\xi)\vert \cdot \vert A_1(\eta)\vert} 
\times q^{\vert A_1(\xi)\vert \cdot ( \vert A_1(\xi)\vert -1)} \prod_{y \in A_{-1}(\xi)} q^{2\vec{N}_y^1(\xi)}.
\end{multline*}
\end{theorem}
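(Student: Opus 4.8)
The plan is to deduce Theorem~\ref{OPEN} from the Schur--Weyl duality of \cite{BWW} between a right co--ideal subalgebra $\mathcal{B}$ of a quantum group $U_q$ and a Hecke algebra $H_B$ of type $B$, in exact parallel with the proof of Theorem~\ref{Sch}. Concretely, take $V=\mathbb{C}^3$ with basis indexed by the labels $\{-1,0,1\}$, form $\mathcal{V}=V^{\otimes(L+1)}$ with the tensor factors labelled by the sites $0,1,\dots,L$, and let $\rho\colon H_B\to\mathrm{End}(\mathcal{V})$ be the representation in which $T_1,\dots,T_{L-1}$ act by the standard $R$--matrix on adjacent bulk factors and $T_0$ acts on the $0$--th factor by the boundary generator. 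Schur--Weyl duality then says $\mathrm{End}_{\mathcal{B}}(\mathcal{V})=\rho(H_B)$, so $\rho(H_B)$ and the image of $\mathcal{B}$ in $\mathrm{End}(\mathcal{V})$ are mutual commutants. The strategy is to realize the generator inside $\rho(H_B)$ (after a diagonal conjugation), and then read off $D$ from a judiciously chosen element of $\rho(\mathcal{B})$.

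First I would identify the generator. Conjugating $L$ by the diagonal matrix $G$ whose $(\eta,\eta)$ entry is built from $q$--powers of the particle positions together with a $Q$--power counting the negatively labelled particles (the ground--state transform), one checks by a direct computation of the $3\times 3$ boundary block and the $9\times 9$ bulk blocks that $\widetilde L:=GLG^{-1}$ lies in $\rho(H_B)$: in the bulk $\widetilde L$ is the usual $\sum_x(\rho(T_x)-c)$ combination producing two--species ASEP on labels $\{-1,0,1\}$, and at the origin the normalized image of $\rho(T_0)$ reproduces exactly the transitions $1\to-1$ at rate $1$ and $-1\to1$ at rate $Q$, with the hole $0$ fixed. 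Since $r=1$ these block computations are small and explicit; they also exhibit the bulk part of $D$ as the $n=2$ specialization of \eqref{MSch}, which is a useful consistency check. The mechanism that converts a symmetry into a duality is the standard one: there is an anti--automorphism of $H_B$ whose $\rho$--image conjugates $\widetilde L$ to $\widetilde L^{T}$, i.e.\ $\widetilde L^{T}=\Lambda\widetilde L\Lambda^{-1}$ for an explicit diagonal $\Lambda$ (coming from the reversible product measure of the symmetric part), and combining this with $[\widetilde L,\rho(c)]=0$ for every $c\in\mathcal{B}$ shows that $D:=G^{-1}\rho(c)\Lambda^{-1}G^{-1}$ satisfies $LD=DL^{T}$ for \emph{any} $c\in\mathcal{B}$.

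The remaining task, and the main obstacle, is to pick the right $c$ and to verify that this matrix element is exactly the function displayed in the theorem. I would take $c$ to be a product over the sites of the co--ideal lowering generators implementing the bulk moves $1\mapsto0\mapsto-1$ together with the boundary co--ideal generator swapping $\pm1$, and then grind out $\langle\eta\,|\,G^{-1}\rho(c)\Lambda^{-1}G^{-1}\,|\,\xi\rangle$. The indicator $1_{\{A_1(\xi)\subseteq A_1(\eta),\,A_{-1}(\xi)\subseteq A_{-1}(\eta)\}}$ should emerge from the triangularity of the lowering operators, the factors $Q^{-2}q^{-2x-2\vec N_x^1(\eta)}$ and $q^{2y+2\cev N_y^{-1}(\eta)}$ from the $q$--commutator combinatorics exactly as in the closed case, and the extra cross terms $q^{-2|A_{-1}(\xi)|\cdot|A_1(\eta)|}$, $q^{|A_1(\xi)|(|A_1(\xi)|-1)}$ and $\prod_{y\in A_{-1}(\xi)}q^{2\vec N_y^1(\xi)}$ from the interaction of the boundary generator with the bulk generators together with the normalizations of $G$ and $\Lambda$ at the origin. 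I expect no conceptual difficulty beyond this bookkeeping. Finally, for $L=\infty$ one observes that $D(\eta,\xi)$ is a finite product since $\xi$ carries finitely many particles, so the identity $LD=DL^{T}$ either follows by passing to the limit from finite $L$ or can be verified directly, all sums being finite.
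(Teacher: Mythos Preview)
Your plan is essentially the paper's approach: the generator is realized inside the type $B$ Hecke action, the commuting symmetry $S$ comes from the co--ideal subalgebra, and the duality function is $D=G^{-1}SG^{-1}$. Two small corrections: your $\Lambda$ is unnecessary, since the Hamiltonian built from the Hecke generators is already self--adjoint (the paper notes this explicitly, so $B=\mathrm{Id}$ in the framework of Section~\ref{SETUP}); and the symmetry is not a ``product over the sites'' of several generators but the coproduct $\Delta^{(L)}(f_{1/2})$ of the \emph{single} co--ideal element $f_{1/2}=E_{-1/2}+QK_{-1/2}^{-1}F_{1/2}$, whose powers applied to the all--zero vector reach every basis element exactly once. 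The substance of the paper's proof is precisely the ``bookkeeping'' you defer: it decomposes $\Delta^{(L)}(f_{1/2})=a_0+\sum_y a_y^{+}+\sum_x a_x^{-}$, proves the $q^2$--commutation relations among these pieces, computes $G$ explicitly, and then carries out a lengthy telescoping simplification of $G^{-1}SG^{-1}(\eta,\xi)$ to arrive at the displayed $D$ up to the constant $q^{-d(\xi)(d(\xi)-1)}$.
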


To help elucidate this theorem, we give some examples and degenerations.

\begin{example}
Suppose that $A_1(\xi)=\emptyset$ and $0 \notin A_{-1}(\xi),A_{-1}(\eta)$. The duality function then becomes
$$
1_{\{A_{-1}(\xi)\subseteq A_{-1}(\eta)\}} \prod_{y \in A_{-1}(\xi)} q^{2y + 2\cev{N}_y^{-1}(\eta)} \times q^{-2 \vert A_{-1}(\xi)\vert \cdot \vert A_1(\eta)\vert}.
$$
At infinitesimal times, both $A_{-1}(\xi)$ and $A_{-1}(\eta)$ evolve as ASEPs with right jump rates $1$ and left jump rates $q$. Furthermore, the term $q^{-2 \vert A_{-1}(\xi)\vert \cdot \vert A_1(\eta)\vert}$ remains constant, and we recover Sch\"{u}tz's duality result as a corollary.

\end{example}

\begin{example}
Now suppose that $A_{-1}(\xi)=\emptyset$ and $0 \notin A_{-1}(\xi),A_{-1}(\eta)$. The duality function then becomes
$$
1_{\{A_{-1}(\xi)\subseteq A_{-1}(\eta)\}}  
 \prod_{x \in A_1(\xi)} (Q^{-2}q^{-2x-2\vec{N}_x^1(\eta)})  \times q^{\vert A_1(\xi)\vert \cdot ( \vert A_1(\xi)\vert -1)} 
$$
\end{example}
At infinitesimal times, both $A_{1}(\xi)$ and $A_1(\eta)$ evolve an ASEPs with right jump rates $q$ and left jump rates $1$. Furthermore, the term $q^{\vert A_1(\xi)\vert \cdot ( \vert A_1(\xi)\vert -1)} $ remains constant, and we again recover Sch\"{u}tz's duality result, this time with the asymmetry direction reversed.

\begin{example}
The previous two examples had either $A_1(\xi),A_{-1}(\xi)$ as empty sets. Here, suppose that those sets are nonempty and interact with each other. The simplest case of this is when $A_1(\xi) = \{x\},A_{-1}(\xi)= \{x+1\}$ and $A_{-1}(\eta)= \{x\} , A_1(\eta) = \{x+1\}$, for some $x>0$. Then
$$
LD(\xi,\eta) = L(\xi,\eta) \cdot D(\eta,\eta) = q^2 \cdot Q^{-2}q^{-2(x+1)}q^{2x} \times 1 \times q^2 = Q^{-2}q^2
$$
and
$$
DL^T(\xi,\eta) = D(\xi,\xi)\cdot L(\eta,\xi) = 1 \cdot Q^{-2}q^{-2x}q^{2(x+1)} \times 1 \times 1 = Q^{-2}q^2.
$$
\end{example}

\begin{example}
Fix $L=3$ and define the particle configurations
$$
\eta = -1 \ -1 \ 1 \ 1, \quad \xi = 1 \ -1 \ 0 \ 1, \quad \hat{\eta}= 1 \ -1 \ 1 \ 1, \quad \hat{\xi}= -1 \ -1 \ 0 \ 1.
$$
Then
$$
LD(\eta,\xi) = L(\eta,\hat{\eta}) \cdot D(\hat{\eta},\xi) = Q^2 \cdot Q^{-2}q^{-0-4}Q^{-2}q^{-6-0}q^{2-0}\times q^{-2 \cdot 1 \cdot 3} \times q^{2\cdot 1}q^2 = Q^{-2}q^{-10}
$$
and
$$
DL(\eta,\xi) = D(\eta, \hat{\xi}) L(\xi,\hat{\xi}) = 1\cdot Q^{-2}q^{-6-0}q^{0-0}q^{2-2} \times q^{-2 \cdot 2 \cdot 2} \times q^{1\cdot 0} q^4 = Q^{-2}q^{-10}.
$$
\end{example}

\begin{example}
It is straightforward that the term $q^{-2 \vert A_{-1}(\xi)\vert \cdot \vert A_1(\eta)\vert} $ in the duality function only changes when a jump occurs at the boundary. Similarly, the term $q^{\vert A_1(\xi)\vert \cdot ( \vert A_1(\xi)\vert -1)} \prod_{y \in A_{-1}(\xi)} q^{2\vec{N}_y^1(\xi)}$ only changes when a $1$ and $-1$ switch places in the bulk. Indeed, if a $1$ particles jumps into the system from the boundary in $\xi$, the latter term is changed by 
$$
q^{ 2\vert A_1(\xi)\vert - 2\vec{N}_0^1(\xi)} =1.
$$
One can think of the duality function in the following way: the term in the first line of $D(\eta,\xi)$ is the ``naive'' extension of Sch\"{u}tz's duality result, the next term is the ``correction'' due to the boundary, and the last term is the ``correction'' due to interactions between $1$ and $-1$ particles in the bulk.
\end{example}

\subsubsection{Braided ASEP}
Let $\eta,\xi$ be two particle configurations of braided ASEP. Namely, suppose that $\eta(x)\in\{0,1,\ldots,m\}$ denotes the number of particles at lattice site $x$ in the configuration $\eta$, and similarly with $\xi$. Define the function $D_b(\eta,\xi)$ by
$$
D_b(\eta,\xi) = \prod_{x} \frac{ \binom{\eta_x}{\xi_x}_q}{\binom{m}{\xi_x}_q} q^{\xi_x(-2mx + 2\cev{N}_x(\eta))}
$$
Note that the duality functional is nonzero if and only if $\eta_x \geq \xi_x$ for all $x$. Up to space reversal, this is the same functional that appeared in \cite{CGRS} (and \cite{KIMRN} for the multi--species version); in \cite{CGRS},\cite{KIMRN} the particles drift to the right, whereas here the particles drift to the left. Note that the $q$--binomial in \cite{CGRS},\cite{KIMRN} is different from the one used here. When all $m=1$, this is the same duality function as in \eqref{MSch}, up to a normalizing constant.
\begin{theorem}\label{braided}
The braided ASEP is self--dual with respect to the function $D_b$.
\end{theorem}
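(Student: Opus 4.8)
\medskip
\noindent\textbf{Proof strategy.}
The plan is to deduce Theorem~\ref{braided} from the Schur--Weyl duality of \cite{crampe2020fused} by the same route as Theorems~\ref{Sch} and~\ref{OPEN}. Realize the one--site space $\mathbb{C}^{m+1}$, with basis $|0\rangle,\ldots,|m\rangle$ (where $|k\rangle$ records that $k$ particles occupy the site), as the irreducible spin--$\tfrac m2$ representation $V$ of $U_q(\mathfrak{sl}_2)$, built as the $q$--symmetric subspace of $(\mathbb{C}^2)^{\otimes m}$; on a lattice with sites $\{1,\ldots,L\}$ the state space of braided ASEP is then $V^{\otimes L}\subseteq(\mathbb{C}^2)^{\otimes mL}$. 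The first step is a model identification: after conjugating the braided ASEP generator $\mathcal L=\sum_x\mathcal L_{x,x+1}$ by an explicit diagonal matrix $G$ (with diagonal entries of the form $\prod_x q^{cx\,\eta(x)}\,w(\eta(x))$ for an explicit constant $c$ and weights $w$), one checks that each $G^{-1}\mathcal L_{x,x+1}G$ is, up to the additive scalar that makes its row sums vanish, the image under fused Schur--Weyl of the $x$--th generator of the fused Hecke algebra of \cite{crampe2020fused}, acting on the blocks $m(x-1){+}1,\ldots,m(x{+}1)$. Concretely this means computing the matrix entries of one fused $\check R$--type operator on $V\otimes V$ and matching them against the jump rates $\binom{k_1}{l_2}_q\,(q^{2(m-k_2)};q^{-2})_{k_1-l_2}\,q^{2(m-k_2-k_1+l_2)l_2}$; this is a finite, if lengthy, calculation, and it is here that $c$, $w$, and the overall normalization get pinned down.

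Granting the identification, the remainder is the standard ``symmetry $\Rightarrow$ self--duality'' argument (as in \cite{CGRS}). By \cite{crampe2020fused} the commutant of the fused Hecke algebra on $V^{\otimes L}$ is the image of $U_q(\mathfrak{sl}_2)$ under the iterated coproduct $\Delta^{(L)}$, so $G^{-1}\mathcal L G$ commutes with $\Delta^{(L)}(E),\Delta^{(L)}(F),\Delta^{(L)}(K^{\pm1})$, whence $\mathcal L$ commutes with the conjugated symmetries $S:=G\,\Delta^{(L)}(\cdot)\,G^{-1}$. Next, braided ASEP on $\{1,\ldots,L\}$ with closed boundaries is reversible with respect to the product measure $\mu(\eta)=\prod_x\binom{m}{\eta(x)}_q\,q^{2mx\,\eta(x)}$, which is checked by a direct computation with the rate formula and the $q$--Vandermonde identity; consequently $D_0:=\mathrm{diag}(1/\mu)$ is the trivial self--duality function, $\mathcal L D_0=D_0\mathcal L^T$. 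For any symmetry $S$ of $\mathcal L$ one then has $\mathcal L(SD_0)=S\,\mathcal L D_0=SD_0\,\mathcal L^T$, so $SD_0$ is again a self--duality function. Taking $S=(G\,\Delta^{(L)}(F)\,G^{-1})^N$ for suitable $N$ and expanding the iterated coproduct by the quantum binomial theorem produces exactly the site factors $\binom{\eta_x}{\xi_x}_q$; the $K$--factors carried along in $\Delta^{(L)}(F^N)$ produce the couplings $q^{2\xi_x\cev{N}_x(\eta)}$; and the drift weights $q^{-2mx\xi_x}$ together with the normalization $1/\binom{m}{\xi_x}_q$ come from $D_0=\mathrm{diag}(1/\mu)$. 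Matching $SD_0$ with $D_b$ up to an overall constant therefore reduces to a $q$--binomial identity.

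The main obstacle is Step~1: writing the fused local operator of \cite{crampe2020fused} in the occupation--number basis, fixing the diagonal conjugation $G$ and the normalization, and recognizing the off--diagonal entries as precisely the stated rates (the diagonal is then forced). Steps~2--3 are bookkeeping --- verifying reversibility ($q$--Vandermonde) and matching $\Delta^{(L)}(F^N)D_0$ with $D_b$ (the coproduct expansion). As an independent check, one can also verify $\mathcal L D_b=D_b\mathcal L^T$ directly: since $\mathcal L$ is a sum of nearest--neighbour terms and $D_b$ has the near--product form $\prod_x\frac{\binom{\eta_x}{\xi_x}_q}{\binom{m}{\xi_x}_q}\,q^{\xi_x(-2mx+2\cev{N}_x(\eta))}$, the matrix identity localizes to a finite family of two--site identities indexed by the occupations $(\eta_x,\eta_{x+1},\xi_x,\xi_{x+1})$, each a $q$--binomial/$q$--Pochhammer identity; this is essentially the calculation carried out in \cite{KIMRN} for ASEP$(q,m/2)$, transported here because $D_b$ is, up to space reversal, the same function.
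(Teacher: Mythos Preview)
Your overall architecture is exactly the paper's: place the braided ASEP inside the CGRS framework of Section~\ref{SETUP}, with the Hamiltonian coming from the fused Hecke algebra action of \cite{crampe2020fused} and the symmetry from $U_q(\mathfrak{sl}_2)$ via the fused Schur--Weyl duality. You also correctly identify where the weight of the proof lies: the paper states outright that for Theorem~\ref{braided} ``the duality function is already known from \cite{CGRS}; this time, the primary work comes in giving explicit formulas for the jump rates that come from the Hecke algebra action.'' Your Steps~2--3 (symmetry $\Rightarrow$ duality, matching $SD_0$ with $D_b$) are indeed bookkeeping and agree with the paper.

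The gap is in Step~1. You describe the model identification as ``computing the matrix entries of one fused $\check R$--type operator on $V\otimes V$ and matching them against the jump rates\ldots a finite, if lengthy, calculation.'' But the local fused operator $\Sigma_1$ is a product of $m^2$ elementary $\check R$--matrices acting on $(\mathbb{C}^2)^{\otimes 2m}$, and extracting the closed form
\[
\binom{k_1}{l_2}_q\,(q^{2(m-k_2)};q^{-2})_{k_1-l_2}\,q^{2(m-k_2-k_1+l_2)l_2}
\]
from that product is not a routine expansion for general $m$. The paper's actual content is a method for doing this: it reinterprets the matrix entry as $\mathbb{P}(J_{m,k_1,k_2}=l_2)$ for an explicit auxiliary discrete--time Markov chain on $\{1,\ldots,2m\}$ (each time step applying $m$ consecutive $\check S$'s, with a ``truncated geometric jump, then shift'' description), reduces $J_{m,k_1,k_2}$ to tracking the leftmost $y$--particle, proves a recurrence for the target rates (Lemma~\ref{recur}) via a $q$--Pascal identity, and then shows the auxiliary chain satisfies the same recurrence in $k_1$ by computing a single conditional probability. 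Without some device of this kind --- the probabilistic interpretation plus the induction on $k_1$ --- ``compute and match'' is not a proof; it is precisely the step the paper is supplying.
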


\begin{example}
We will see that $LD(3\ 0, 1\ 1) = DL^T(3 \ 0 , 1 \ 1)$. The nonzero contributions are given by
\begin{align*}
L(3 \ 0, 2 \ 1) &= [3]_q(1-q^{2m})(1-q^{2m-2})q^{2(m-2)}, \quad \quad D(2\ 1, 1 \ 1) = \frac{\binom{2}{1}_q \binom{1}{1}_q}{\binom{m}{1}_q \binom{m}{1}_q}  q^{-2m+2\cdot 2} , \\
L(3 \ 0, 1 \ 2)&=[3]_q(1-q^{2m})q^{2\cdot (m-1)\cdot 2}, \quad \quad \quad \quad \quad \quad \quad D(1 \ 2, 1 \ 1)=\frac{\binom{1}{1}_q \binom{2}{1}_q}{\binom{m}{1}_q \binom{m}{1}_q} q^{-2m+2},\\
L( 1 \ 1, 2 \ 0)&=(1-q^{2m-2}) , \quad \quad \quad \quad\quad \quad \quad \quad \quad \quad \quad \quad D(3\ 0, 2\ 0)= \frac{\binom{3}{2}_q}{\binom{m}{2}_q}.
\end{align*}
So 
\begin{align*}
LD(3\ 0, 1 \ 1) &= L(3 \ 0 , 2 \ 1)D(2 \ 1, 1\ 1) + L(3 \ 0, 1 \ 2)D(1 \ 2, 1 \ 1) \\
&= \frac{[3]_q(1-q^{2m})[2]_q}{\binom{m}{1}_q\binom{m}{1}_q}\left(  (1-q^{2m-2})q^{2m-4}q^{-2m+4} + q^{4m-4} q^{-2m+2}\right) \\
&= \frac{[3]_q(1-q^{2m})[2]_q}{[m]_q[m]_q}
\end{align*}
and
\begin{align*}
DL^T(3 \ 0, 1 \ 1) &= D(3 \ 0 , 2 \ 0)L(1 \ 1, 2 \ 0)\\
&= \frac{[3]_q[2]_q}{[m]_q[m-1]_q}(1-q^{2m-2})\\
&=  \frac{[3]_q[2]_q}{[m]_q [m]_q } (1-q^{2m})
\end{align*}
so the two terms are equal.
\end{example}

\begin{example}
By a similar calculation,
$$
LD( 2 \ 4, 3 \ 1) = \frac{[2]_q[3]_q[4]_q}{[m]_q[m]_q[m-1]_q[m-2]_q}( q^{2m-8}[2]_q - (q^{2m-8}-1)[5]_q)
$$
and
$$
DL^T(2 \ 4, 3 \ 1) = \frac{[2]_q[3]_q[4]_q}{[m]_q[m]_q[m-1]_q[m-2]_q}( [m-2]_q(1-q^2)[3]_q+q^6[2]_q).
$$
which are equal to each other. 
\end{example}
\section{Background}

\subsection{Framework for Markov duality}\label{SETUP}
Recall the general set up of \cite{CGRS} and \cite{KIMRN}. Suppose that $M$ is a vector space with some canonical basis $\{c_b: b\in \mathcal{B}\}$.
\begin{itemize}
\item
There is a Hamiltonian $H$  such that $B^{-1}HB$ is self--adjoint for some diagonal matrix $B$; in other words $(B^{-1}HB)^T=B^{-1}HB.$ Further assume that the matrix entries of $H$ with respect to $c_b$ are all non--negative. 

\item
There exists an eigenvector $\Omega$  of $H$ with eigenvalues $\lambda$; in other words, $H\Omega = \lambda \Omega$. 

\item

Further suppose that $SH=HS$ for some symmetry $S$. Additionally suppose that every basis element $c_b$ occurs as a nonzero coefficient in $S^d\Omega$ for a unique $d\geq 0$.

\end{itemize}

Now define $G$ to be the diagonal matrix $G$ such that its $(b,b)$--entry is the nonzero coefficient of $c_b$ in $S^d\Omega$. Then $\mathcal{L}:=G^{-1}(H-\lambda {I})G$ is the generator of a continuous--time Markov chain. Additionally, $D:=G^{-1}SG^{-1}B^2$ is a self--duality function for the process generated by $\mathcal{L}$; that is, $\mathcal{L}D=D\mathcal{L}^T$. Additionally, $G^2B^{-2}$ defines reversible measures this process.

Additionally, suppose there is a matrix $V$ acting on $M$ such that $\mathcal{L}=V^{-1} \tilde{\mathcal{L}}V$, where $\tilde{\mathcal{L}}$ is the generator of another continuous--time process. Then $VD$ is a duality between $\mathcal{L}$ and $\mathcal{L}$. In other words, $\tilde{\mathcal{L}}VD=VD\mathcal{L}$.

\subsection{Hecke Algebras}

\subsubsection{Type A}
The Hecke algebra $\mathcal{H}_L(q)$ (of type $A_{L}$) is generated by $T_1,\ldots,T_{L}$ with relations
\begin{align*}
T_i T_{i+1} T_i &= T_{i+1}T_i T_{i+1}, \quad 1 \leq i \leq L,\\
T_i T_j &= T_j T_i, \quad \vert i-j\vert\geq 2,\\
T_i^2 &= (q-1)T_i + q T_1.
\end{align*}

Take $M$ to be (as a vector space) $\mathbb{C}^n \otimes \cdots \otimes \mathbb{C}^n$, with $L+1$ tensor powers. To define an $\mathcal{H}_L(q)$--module structure on $M$, it suffices to define the action of each $T_i$. Each $T_i$ acts on the $i$th and $(i+1)$--th tensor power $\mathbb{C}^n \otimes \mathbb{C}^n$ as the matrix $R$, where $R$ acts as
$$
\sum_{i<j} E_{j, i} \otimes E_{i, j}+q^{-1} \sum_{i<j} E_{i, j} \otimes E_{j, i}+\left(1- q^{-1}\right) \sum_{i<j} E_{j, j} \otimes E_{i, i}+\sum_{i} E_{i, i} \otimes E_{i, i},
$$
where $E_{i,j}$ denotes the matrix with a $1$ at the $(i,j)$--entry and $0$ elsewhere.

In \cite{BW}, there is a two parameter Hecke algebra $\mathcal{H}_L(r,s)$ with relations
$$
\begin{array}{l}\text { } T_{i} T_{i+1} T_{i}=T_{i+1} T_{i} T_{i+1}, \quad 1 \leq i\leq L \\ \text { } T_{i} T_{j}=T_{j} T_{i}, \quad|i-j| \geq 2 \\ \text {  } T_{i}^{2}=(s-r) T_{i}+r s 1\end{array}.
$$
The $R$--matrix is given by 
$$
r \sum_{i<j} E_{j, i} \otimes E_{i, j}+s^{-1} \sum_{i<j} E_{i, j} \otimes E_{j, i}+\left(1-r s^{-1}\right) \sum_{i<j} E_{j, j} \otimes E_{i, i}+\sum_{i} E_{i, i} \otimes E_{i, i}
$$
and thus there is a natural action on $M$.

\subsubsection{Type $B$}
The Hecke algebra $\mathfrak{H}_L$ (of type $B_L$) is generated by $T_0,T_1,\ldots,T_{k-1}$ subject to the relations
\begin{align*}
T_i T_{i+1}T_i &= T_{i+1}T_i T_{i+1}, \quad 1 \leq i < L\\
T_i T_j &= T_j T_i, \quad \vert i-j\vert\geq 2,\\
T_0T_1T_0T_1 &= T_1T_0T_1T_0,\\
(T_i - q^{-1})(T_i + q) &=0, \quad 1 \leq i < L\\
(T_0-Q^{-1})(T_0+Q) &=0.
\end{align*}
Note that this is the type $A_L$ Hecke algebra with an additional generator $T_0$.

Take $M$ to be (as a vector space) $\mathbb{C}^{2r+1} \otimes \cdots \otimes \mathbb{C}^{2r+1}$ with $L+1$ tensor powers. Each $T_i$ for $1 \leq i \leq L$ acts on the $i$th and $(i+1)$--th tensor power as 
$$
q^{-1} \sum_i E_{i,i} \otimes E_{i,i} + \sum_{i<j} E_{j,i} \otimes E_{i,j} + (q^{-1}-q)\sum_{i<j} E_{j,j} \otimes E_{i,i} + \sum_{i<j} E_{i,j} \otimes E_{j,i}.
$$
where the indices range over $-r,-r+1,\ldots,0,\ldots,r-1,r$. The generator $T_0$ acts on the first tensor power as
$$
Q^{-1} E_{0,0} + \sum_{i>0} E_{-i,i} + (Q^{-1}-Q)\sum_{i<0} E_{i,i} + \sum_{i<0} E_{-i,i}.
$$
In this way, $M$ is a representation of $\mathfrak{H}_L$.

A basis of $\mathbb{C}^{2r+1}$ is indexed by $\{-r,\ldots,0,\ldots,r\}$. A basis of $M$ is then indexed by sequences of integers $ b= (b_0,\ldots,b_{L})$ where $-r \leq b_l \leq r$. Let $\mathcal{B}$ denote the set of such sequences. From now on, let us fix $r=1$ for simplicity. It turns out to actually be easier to index $\mathcal{B}$ by sequences
\begin{equation}\label{seq}
(x_{d(1)},x_{d(1)-1},\ldots,x_1;z_0;y_1,\ldots,y_{d(-1)}),
\end{equation}
where $ d(1),d(-1) $ are non--negative integers such that $d(1) + d(-1) \leq L+1$, the $x$'s and $y$'s are ordered by $x_1<\ldots < x_{d(1)}$ and $y_1<\ldots, y_{d(-1)}$, and $z_0 \in \{-1,0,1\}$. The relationship between these two indices is that  $z_0=b_0$, each $b_{x_j}$ equals $1$ and each $b_{y_j}$ equals $-1$. In words, this means that $(x_1<\ldots < x_{d(1)})$ are exactly the locations of the particles labelled by $1$ that are away from site $0$, and $(y_1 < \ldots < y_{d(-1)})$ are exactly the locations of the particles labelled by $-1$ that are away from site $0$, and $z_0$ is the label of the particle located at site $0$. 

\subsubsection{Fused Hecke Algebra }
Before defining the fused Hecke algebra from \cite{crampe2020fused}, we first define fused permutations in terms of diagrams. Place two rows of $L$ ellipses, one on top of the other, and connect top ellipses with bottom ellipses with edges. Fix a sequence of positive integers $\mathbf{m}=(m_1,\ldots,m_L)$. For each $x \in \{1,\ldots,L\}$, exactly $m_a$ edges start from the $x$--th top ellipses and exactly $m_x$ edges arrive at the $x$--th bottom ellipse. For each $x \in \{1,\ldots,L\}$, let $I_a$ denote the multi-set indicating the bottom ellipses reached by the $m_x$ edges starting from the $x$--th top ellipse. Two diagrams are equivalent if their multisets $(I_1,\ldots,I_L)$ coincide. A \textit{fused permutation} is an equivalence class of diagrams. Below is an example of a fused permutation with the corresponding multi--set.

\begin{center}
\begin{tikzpicture}[scale=0.3]

\fill (21,2) ellipse (0.6cm and 0.2cm);\fill (21,-2) ellipse (0.6cm and 0.2cm);
\draw[thick] (20.8,2) -- (20.8,-2);\draw[thick] (21.2,2)..controls +(0,-2) and +(0,+2) .. (27,-2);  
\fill (24,2) ellipse (0.6cm and 0.2cm);\fill (24,-2) ellipse (0.6cm and 0.2cm);
\draw[thick] (24,2)..controls +(0,-2) and +(0,+2) .. (21,-2); 
\fill (27,2) ellipse (0.6cm and 0.2cm);\fill (27,-2) ellipse (0.6cm and 0.2cm);
\draw[thick] (27,2)..controls +(0,-2) and +(0,+2)..(24,-2);
\node at (38,0) {$(\{1,3\},\{1\},\{2\})$};

\end{tikzpicture}
\end{center}
If all $m_a=1$, the a fused permutation is just a usual permutation in the symmetric group $S_L$.

Fused permutations can be multiplied by concatenating two diagrams, removing the middle ellipses, and normalizing. Because the precise form of the multiplication will not be needed here, we refer to \cite{crampe2020fused} for details. 

\begin{definition}
The $\mathbb{C}$-vector space $H_{\bk,L}(q)$ is the quotient of the vector space with basis indexed by fused braids by the following relations:
 \begin{itemize}
   \item[(i)] The Hecke relation:  
   \begin{center}
 \begin{tikzpicture}[scale=0.25]
\draw[thick] (0,2)..controls +(0,-2) and +(0,+2) .. (4,-2);
\fill[white] (2,0) circle (0.4);
\draw[thick] (4,2)..controls +(0,-2) and +(0,+2) .. (0,-2);
\node at (6,0) {$=$};
\draw[thick] (12,2)..controls +(0,-2) and +(0,+2) .. (8,-2);
\fill[white] (10,0) circle (0.4);
\draw[thick] (8,2)..controls +(0,-2) and +(0,+2) .. (12,-2);
\node at (17,0) {$-\,(q-q^{-1})$};
\draw[thick] (21,2) -- (21,-2);\draw[thick] (25,2) -- (25,-2);
\end{tikzpicture}
\end{center}
  \item[(ii)]  The idempotent relations: for top ellipses,
 \begin{center}
 \begin{tikzpicture}[scale=0.4]
\fill (2,2) ellipse (0.8cm and 0.2cm);
%\draw[thick] (1.6,2)..controls +(0,-1) and +(1,1) .. (-1,0);
\draw[thick] (2.2,2)..controls +(0,-1.5) and +(1,1) .. (1.2,0);
\fill[white] (2,0.7) circle (0.2);
\draw[thick] (1.8,2)..controls +(0,-1.5) and +(-1,1) .. (2.8,0);
%\draw[thick] (2.4,2)..controls +(0,-1) and +(-1,1.4) .. (4.2,0);
\node at (4.5,1) {$=$};
\node at (7,1) {$q$};
\fill (9,2) ellipse (0.8cm and 0.2cm);
%\draw[thick] (8.6,2)..controls +(0,-1) and +(1,1) .. (6,0);
\draw[thick] (8.8,2)..controls +(0,-1.5) and +(0.5,0.5) .. (8.2,0);
\draw[thick] (9.2,2)..controls +(0,-1.5) and +(-0.5,0.5) .. (9.8,0);
%\draw[thick] (9.4,2)..controls +(0,-1) and +(-1,1.4) .. (11.2,0);

\node at (13,1) {and};

\fill (18,2) ellipse (0.8cm and 0.2cm);
%\draw[thick] (17.6,2)..controls +(0,-1) and +(1,1) .. (15,0);
\draw[thick] (17.8,2)..controls +(0,-1.5) and +(-1,1) .. (18.8,0);
\fill[white] (18,0.7) circle (0.2);
\draw[thick] (18.2,2)..controls +(0,-1.5) and +(1,1) .. (17.2,0);
%\draw[thick] (18.4,2)..controls +(0,-1) and +(-1,1.4) .. (20.2,0);
\node at (20.5,1) {$= $};
\node at (23,1) {$q^{-1}$};
\fill (25,2) ellipse (0.8cm and 0.2cm);
%\draw[thick] (24.6,2)..controls +(0,-1) and +(1,1) .. (22,0);
\draw[thick] (24.8,2)..controls +(0,-1.5) and +(0.5,0.5) .. (24.2,0);
\draw[thick] (25.2,2)..controls +(0,-1.5) and +(-0.5,0.5) .. (25.8,0);
%\draw[thick] (25.4,2)..controls +(0,-1) and +(-1,1.4) .. (27.2,0);
\end{tikzpicture}
\end{center}
and for bottom ellipses,
\begin{center}
 \begin{tikzpicture}[scale=0.4]
\fill (2,0) ellipse (0.8cm and 0.2cm);
%\draw[thick] (1.6,0)..controls +(0,1) and +(1,-1.5) .. (-1,2);
\draw[thick] (1.8,0)..controls +(0,1.5) and +(-1,-1) .. (2.8,2);
\fill[white] (2,1.25) circle (0.2);
\draw[thick] (2.2,0)..controls +(0,1.5) and +(1,-1) .. (1.2,2);
%\draw[thick] (2.4,0)..controls +(0,1) and +(-1,-1.4) .. (4.2,2);
\node at (4.5,1) {$=$};
\node at (7,1) {$q$};
\fill (9,0) ellipse (0.8cm and 0.2cm);
%\draw[thick] (8.6,0)..controls +(0,1) and +(1,-1.5) .. (6,2);
\draw[thick] (9.2,0)..controls +(0,1.5) and +(-0.5,-0.5) .. (9.8,2);
\draw[thick] (8.8,0)..controls +(0,1.5) and +(0.5,-0.5) .. (8.2,2);
%\draw[thick] (9.4,0)..controls +(0,1) and +(-1,-1.4) .. (11.2,2);

\node at (13,1) {and};

\fill (18,0) ellipse (0.8cm and 0.2cm);
%\draw[thick] (17.6,0)..controls +(0,1) and +(1,-1.5) .. (15,2);
\draw[thick] (18.2,0)..controls +(0,1.5) and +(1,-1) .. (17.2,2);
\fill[white] (18,1.25) circle (0.2);
\draw[thick] (17.8,0)..controls +(0,1.5) and +(-1,-1) .. (18.8,2);
%\draw[thick] (18.4,0)..controls +(0,1) and +(-1,-1.4) .. (20.2,2);
\node at (20.5,1) {$=$};
\node at (23,1) {$q^{-1}$};
\fill (25,0) ellipse (0.8cm and 0.2cm);
%\draw[thick] (24.6,0)..controls +(0,1) and +(1,-1.5) .. (22,2);
\draw[thick] (25.2,0)..controls +(0,1.5) and +(-0.5,-0.5) .. (25.8,2);
\draw[thick] (24.8,0)..controls +(0,1.5) and +(0.5,-0.5) .. (24.2,2);
%\draw[thick] (25.4,0)..controls +(0,1) and +(-1,-1.4) .. (27.2,2);
\end{tikzpicture}
\end{center}
 \end{itemize}
 \end{definition}

In fact, we will not be using the relations in this paper either, but they are included for completeness. Here, we will primarily be focused on the elements $\Sigma_x$, which moves all strands starting from ellipse $x$ to ellipse $x+1$, and vice versa. Below is an example when all $m_x=2$. 
\begin{center}
 \begin{tikzpicture}[scale=0.3]
\node at (-2,0) {$\Sigma_x:=$};
\node at (2,3) {$1$};\fill (2,2) ellipse (0.6cm and 0.2cm);\fill (2,-2) ellipse (0.6cm and 0.2cm);
\draw[thick] (1.8,2) -- (1.8,-2);\draw[thick] (2.2,2) -- (2.2,-2);
\node at (4,0) {$\dots$};
\draw[thick] (5.8,2) -- (5.8,-2);\draw[thick] (6.2,2) -- (6.2,-2);
\node at (6,3) {$x-1$};\fill (6,2) ellipse (0.6cm and 0.2cm);\fill (6,-2) ellipse (0.6cm and 0.2cm);
\node at (10,3) {$x$};\fill (10,2) ellipse (0.6cm and 0.2cm);\fill (10,-2) ellipse (0.6cm and 0.2cm);
\node at (14,3) {$x+1$};\fill (14,2) ellipse (0.6cm and 0.2cm);\fill (14,-2) ellipse (0.6cm and 0.2cm);

\draw[thick] (13.8,2)..controls +(0,-2) and +(0,+2) .. (9.8,-2);
\draw[thick] (14.2,2)..controls +(0,-2) and +(0,+2) .. (10.2,-2);
\fill[white] (12,0) circle (0.5);
\draw[thick] (10.2,2)..controls +(0,-2) and +(0,+2) .. (14.2,-2);
\draw[thick] (9.8,2)..controls +(0,-2) and +(0,+2) .. (13.8,-2);

\draw[thick] (17.8,2) -- (17.8,-2);\draw[thick] (18.2,2) -- (18.2,-2);
\node at (18,3) {$x+2$};\fill (18,2) ellipse (0.6cm and 0.2cm);\fill (18,-2) ellipse (0.6cm and 0.2cm);
\node at (20,0) {$\dots$};
\draw[thick] (21.8,2) -- (21.8,-2);\draw[thick] (22.2,2) -- (22.2,-2);\fill (22,2) ellipse (0.6cm and 0.2cm);\fill (22,-2) ellipse (0.6cm and 0.2cm);
\node at (22,3) {$L$};
\end{tikzpicture}
\end{center}
These elements satisfy the braid relation $\Sigma_x\Sigma_{x+1}\Sigma_x = \Sigma_{x+1}\Sigma_x\Sigma_{x+1}$. 

When all $m_x=1$, define the $R$--matrix $\check{R}$ 
from (12) of \cite{crampe2020fused} by 
$$
\check{R}\left(e_{i} \otimes e_{j}\right):=\left\{\begin{array}{ll}q e_{i} \otimes e_{j} & \text { if } i=j \\ e_{j} \otimes e_{i}+\left(q-q^{-1}\right) e_{i} \otimes e_{j} & \text { if } i<j, \quad \text { where } i, j=1, \ldots, N \\ e_{j} \otimes e_{i} & \text { if } i>j\end{array}\right.
$$
We write this as
$$
\check{R} = \sum_{i>j} E_{j,i} \otimes E_{i,j} + \sum_{i<j} E_{j,i} \otimes E_{i,j}+(q-q^{-1})\sum_{i<j} E_{i,i} \otimes E_{j,j}+ q\sum_i  E_{i,i} \otimes E_{i,i}
$$

\subsection{Quantum groups}

\subsubsection{A two--parameter quantum group}

We recall the two--parameter quantum group from \cite{BW}. 
Let $\alpha_i = \epsilon_{i-1/2} - \epsilon_{i+1/2}$ denote the simple roots in the root system of type $A_{n}$.
Let $\mathcal{U}_{r,s}(\mathfrak{sl}_{n+1})$ be generated by $E_i,F_i,\omega_i^{\pm 1},\omega_i'^{\pm 1}$ for $i \in \{1,2,\ldots,n\}$, with relations
$$
\begin{aligned} \omega_{{i}} \omega_{{i}}^{-1} &=\omega_{{i}}^{-1} \omega_{{i}}=1 \\ 
\omega_{{i}} \omega_{{j}} &=\omega_{{j}} \omega_{{i}} \\
\omega_{i} e_{j}&=r^{\left\langle\epsilon_{i}, \alpha_{j}\right\rangle} s^{\left\langle\epsilon_{i+1}, \alpha_{j}\right\rangle} e_{j} \omega_{i}\\ 
 \omega_{i}^{\prime} e_{j}&=r^{\left\langle\epsilon_{i+1}, \alpha_{j}\right\rangle} s^{\left\langle\epsilon_{i}, \alpha_{j}\right\rangle} e_{j} \omega_{i}^{\prime}\\ 
\omega_{i} f_{j} &=r^{-\left\langle\epsilon_{i}, \alpha_{j}\right\rangle} s^{-\left\langle\epsilon_{i+1}, \alpha_{j}\right\rangle} f_{j} \omega_{i} \\ \omega_{i}^{\prime} f_{j} &=r^{-\left\langle\epsilon_{i+1}, \alpha_{j}\right\rangle} s^{-\left\langle\epsilon_{i}, \alpha_{j}\right\rangle} f_{j} \omega_{i}^{\prime} \\
 E_{{i}} F_{{j}}-F_{{j}} E_{{i}} &=\frac{\delta_{i, j}}{r-s}\left(\omega_{i}-\omega_{i}^{\prime}\right)\\
 E_{{i}}^{2} E_{{j}}+rsE_{{j}} E_{{i}}^{2} &=\left(r+s\right) E_{{i}} E_{{j}} E_{{i}}, & \text { if }i-j=1 \\ 
  rsE_{{i}}^{2} E_{{j}}+E_{{j}} E_{{i}}^{2} &=\left(r+s\right) E_{{i}} E_{{j}} E_{{i}}, & \text { if }i-j=-1 \\ 
 E_{{i}} E_{{j}} &=E_{{j}} E_{{i}}, & \text { if }|i-j|>1 \\ 
 F_{{i}}^{2} F_{{j}}+ r^{-1}s^{-1}F_{{j}} F_{{i}}^{2}&=\left(r^{-1}+s^{-1}\right) F_{{i}} F_{{j}} F_{{i}}, & \text { if }i-j=1 \\
  r^{-1}s^{-1}F_{{i}}^{2} F_{{j}}+ F_{{j}} F_{{i}}^{2}&=\left(r^{-1}+s^{-1}\right) F_{{i}} F_{{j}} F_{{i}}, & \text { if }i-j=-1 
 \\ F_{{i}} F_{{j}} &=F_{{j}} F_{{i}}, &|i-j|>1 \end{aligned}
$$ 
and comultiplication 
$$
\Delta\left(e_{i}\right)=e_{i} \otimes 1+\omega_{i} \otimes e_{i}, \quad \Delta\left(f_{i}\right)=1 \otimes f_{i}+f_{i} \otimes \omega_{i}^{\prime}, \quad \Delta(\omega_i)=\omega_i \otimes \omega_i, \quad \Delta(\omega_i')=\omega_i' \otimes \omega_i'.
$$
When $r=q=s^{-1}$, the algebra $\mathcal{U}_{r,s}(\mathfrak{sl}_{n+1})$ modulo the ideal generated by $\omega_j'-\omega_j^{-1}$ is the usual Drinfeld--Jimbo $\mathcal{U}_q(\mathfrak{sl}_{n+1})$. The element $\omega_j \in \mathcal{U}_q(\mathfrak{sl}_{n+1})$ is more commonly denoted $K_j$.

\subsubsection{Co--ideal subalgebra}
Consider $\mathcal{U}_q(\mathfrak{sl}_{n+1})$, and suppose that $n=2r$ is even. In this case, instead let $i$ range over the values $-r+1/2,-r+3/2,\ldots,-1/2,1/2,\ldots,r-1/2$.

Define a subalgebra $\mathbf{U}$ which is generated by (for $i=1/2,3/2,\ldots,r-1/2)$
$$
\begin{array}{cl}k_{i}=K_{i} K_{-i}^{-1}, & e_{i}=E_{i}+F_{-i} K_{i}^{-1}\left(i \neq \frac{1}{2}\right), \quad f_{i}=E_{-i}+K_{-i}^{-1} F_{i}\left(i \neq \frac{1}{2}\right) \\ & e_{\frac{1}{2}}=E_{\frac{1}{2}}+Q^{-1} F_{-\frac{1}{2}} K_{\frac{1}{2}}^{-1}, \quad f_{\frac{1}{2}}=E_{-\frac{1}{2}}+Q K_{-\frac{1}{2}}^{-1} F_{\frac{1}{2}}\end{array}
$$
%The algebra $\mathbf{U}$ has a presentation 
%$$
%\begin{aligned} k_{i} k_{i}^{-1} &=k_{i}^{-1} k_{i}=1, \quad k_{i} k_{j}=k_{j} k_{i} \\ k_{i} e_{j} k_{i}^{-1} &=q^{\left(\alpha_{i}-\alpha_{-i}, \alpha_{j}\right)} e_{j}, \quad k_{i} f_{j} k_{i}^{-1}=q^{-\left(\alpha_{i}-\alpha_{-i}, \alpha_{j}\right)} f_{j} \\ e_{i} f_{j}-f_{j} e_{i} &=\delta_{i, j} \frac{k_{i}-k_{i}^{-1}}{q-q^{-1}} \\ e_{i} e_{j} &=e_{j} e_{i}, \quad f_{i} f_{j}=f_{j} f_{i} \\ e_{i}^{2} e_{j}+e_{j} e_{i}^{2} &=\left(q+q^{-1}\right) e_{i} e_{j} e_{i}, \quad f_{i}^{2} f_{j}+f_{j} f_{i}^{2}=\left(q+q^{-1}\right) f_{i} f_{j} f_{i}, \quad \text { if }|i-j|>1 \\ e_{\frac{1}{2}}^{2} f_{\frac{1}{2}}+f_{\frac{1}{2}} e_{\frac{1}{2}}^{2} &=\left(q+q^{-1}\right)\left(e_{\frac{1}{2}} f_{\frac{1}{2}} e_{\frac{1}{2}}-e_{\frac{1}{2}}\left(p q k_{\frac{1}{2}}+p^{-1} q^{-1} k_{\frac{1}{2}}^{-1}\right)\right) \\ f_{\frac{1}{2}}^{2} e_{\frac{1}{2}}+e_{\frac{1}{2}} f_{\frac{1}{2}}^{2} &=\left(q+q^{-1}\right)\left(f_{\frac{1}{2}} e_{\frac{1}{2}} f_{\frac{1}{2}}-\left(p q k_{\frac{1}{2}}+p^{-1} q^{-1} k_{\frac{1}{2}}^{-1}\right) f_{\frac{1}{2}}\right) \end{aligned}
%$$
In order to make the notation consistent with \cite{BWW}, we will consider the co--product
$$
\Delta\left(E_{i}\right)=1 \otimes E_{i}+E_{i} \otimes K_{i}^{-1}, \Delta\left(F_{i}\right)=F_{i} \otimes 1+K_{i} \otimes F_{i}, \Delta\left(K_{i}\right)=K_{i} \otimes K_{i}
$$
Note that $\mathbf{U}$ is a right coideal subalgebra, meaning that $\Delta(\mathbf{U}) \subset \mathbf{U} \otimes \mathcal{U}_q(\mathfrak{sl}_{2r+1})$. Indeed, 
\begin{align*}
\Delta(e_i) &= e_i \otimes K_i^{-1} + 1 \otimes E_i + Q^{-1} k_{-i} \otimes F_{-i}K_i^{-1} ,\\
\Delta(f_i) &= f_i \otimes K_{-i}^{-1} + 1\otimes E_{-i} + Q k_i \otimes K_{-i}^{-1}F_i, \text{ for } i\neq \tfrac{1}{2}.
\end{align*}
In particular, 
\begin{multline*}
\Delta^{(L)} (f_{1/2}) = f_{1/2} \otimes (K_{-1/2}^{-1} )^{\otimes L} + 1 \otimes \sum_{j=1}^{L} 1^{\otimes j-1} \otimes E_{-1/2} \otimes (K_{-1/2}^{-1})^{\otimes L-j} \\
+Q k_{1/2} \otimes \sum_{j=1}^{L} (k_{1/2})^{\otimes j-1} \otimes K_{-1/2}^{-1}F_{1/2} \otimes (K_{-1/2}^{-1})^{\otimes L-j}
\end{multline*}

The natural representation of $\mathcal{U}_q(\mathfrak{sl}_{2r+1})$ has a basis indexed by $u_{-r},\ldots,u_0\ldots,u_r$. Define the action so that $E_{k-1/2}$ maps $u_k$ to $u_{k-1}$ and $F_{k+1/2}$ maps $u_k$ to $u_{k+1}$. Likewise $F_{k-1/2}$ maps $u_{k-1}$ to $u_k$ and $E_{k+1/2}$ maps $u_{k+1}$ to $u_k$.

There are also representations of $\mathcal{U}_q(\mathfrak{sl}_{n+1})$ on $\mathrm{Sym}^m \mathbb{C}^{n+1}$ for each $m\geq 1$, but we will not need an explicit expression for this representation.

\subsection{Schur--Weyl Duality}
With the Hecke algebra and the quantum groups defined, we proceed to state the Schur--Weyl duality that will be needed to show Markov duality. Roughly speaking, Schur--Weyl duality states that on certain representations, the actions of the Hecke algebra and quantum groups form double centralizers. Additionally, Schur--Weyl duality gives a decomposition of the representations into irreducibles, but we will not need that result. Here, only a weaker statement is needed, namely that the actions commute with each other. 

\subsubsection{A two parameter Schur--Weyl duality}
The action of $\mathcal{H}_L(r,s)$ and the action of $\mathcal{U}_{r,s}(\mathfrak{sl}_{n+1})$ commute on $(\mathbb{C}^{n+1} )^{\otimes L}$.

\subsubsection{Type B Hecke algebra and co--ideal subalgebra}
The action of $\mathfrak{H}_L$ and the action of the co--ideal subalgebra $\mathbf{U}$ commute on $(\mathbb{C}^{2r+1} )^{\otimes L}$ for all $r \geq 1$. Note that we are requiring the dimension of $\mathbb{C}^{2r+1}$ to be odd. There is an even version of this statement, but does not seem to lead to nice duality functions. (See \cite{KOpen}).

\subsubsection{Fused Schur--Weyl Duality}
For any $\mathbf{m}=(m_1,\ldots,m_L)$, the action of $H_{\mathbf{m},L}(q)$ and the action of $\mathcal{U}_q(\mathfrak{sl}_{n+1})$ on $\mathrm{Sym}^{m_1} \mathbb{C}^{n+1} \otimes \cdots \otimes \mathrm{Sym}^{m_L} \mathbb{C}^{n+1} $ commute with each other.

\section{Proofs}
First, let us outline what is needed for each proof. It is important to note that the matrix $G$ has already been calculated in \cite{KIMRN}, so this step does not need to be repeated. 

For Theorem \ref{Sch}, it is clear from conjugating by $G$ that the dynamics are the multi--species ASEP. What remains is to check that the same duality function \eqref{Sch} arises from the action of the two--parameter quantum group. 

For Theorem \ref{OPEN}, it is again clear from conjugating by $G$ that the dynamics are the open multi--species ASEP described at the beginning of this paper. What remains is to find the duality function that arises from the co--ideal subalgebra $\mathbf{U}$. This duality function will be new. 

For Theorem \ref{braided}, the duality function is already known from \cite{CGRS}. This time, the primary work comes in giving explicit formulas for the jump rates that comes from the Hecke algebra action.

\subsection{Proof of Theorem \ref{Sch} }
Because this result is not new, we keep the proof brief. We actually only check it for $n=1$; the case for higher values of $n$ is similar. 

In order for the dynamics to be stochastic, we must have $s=q,r=q^{-1}$. What remains is to check the duality function. We write the duality function for general values of $r,s$ and plug in their values at the end.

Write $\eta = (\eta_x)_{1 \leq x \leq L}$ where $\eta_x=(\eta_x^{(1)},\eta_x^{(2)})$ is either $(1,0)$ or $(0,1)$.
By the pseudo--factorization property of \cite{CGRS},
$$
S(\eta,\xi) = 1_{\eta \geq \xi} \prod_{\substack{ \xi_x = (0,1) \\ \eta_x=(1,0)}} \left( r^{-1}s \cdot s^{N_x^{(1)}(\xi)}r^{N_x^{(2)}(\xi)}\right).
$$
Here,
$$
N_x^{(i)}(\xi) = \sum_{y>x} \xi_y^{(i)}.
$$
We also have
$$
G(\xi,\xi) = \prod_{\xi_x=(0,1)} (r^{-1}s \cdot s^{N_x^{(1)}(\xi)}r^{N_x^{(2)}(\xi)}) 
$$
Let 
$$
C_1(\eta,\xi) =  \prod_{\substack{ \xi_x = (0,1) \\ \eta_x=(1,0)}} r^{-1}s, \quad \quad C_2(\xi) = \prod_{\xi_x=(0,1)} (r^{-1}s  \cdot r^{N_x^{(2)}(\xi)}) .
$$
There are constants if the particle number is conserved. Then 
$$
G(\xi,\xi) = C_2(\xi)\prod_{x<y} s^{\xi^{(2)}_x \xi^{(1)}_y},
$$
so
\begin{align*}
SG^{-1}(\eta,\xi) = C_1(\eta,\xi) C_2(\xi)^{-1}1_{\eta \geq \xi} & \prod_{x<y} (s^{\eta_x^{(1)} \xi_x^{(2)}\xi_y^{(1)}}r^{\eta_x^{(1)} \xi_x^{(2)}\xi_y^{(2)}} ) \prod_{x<y}s^{-\xi^{(2)}_x \xi^{(1)}_y}\\
C_1(\eta,\xi) C_2(\xi)^{-1}1_{\eta \geq \xi} &\prod_{x<y} (s^{-\eta_x^{(2)} \xi_x^{(2)}\xi_y^{(1)}}r^{\eta_x^{(1)} \xi_x^{(2)}\xi_y^{(2)}} )
\end{align*}
where we used that $\eta_x^{(1)}-1=-\eta_x^{(2)}$. Thus,
$$
G^{-1}SG^{-1}(\eta,\xi) = C_2(\eta)^{-1}C_1(\eta,\xi) C_2(\xi)^{-1} 1_{\eta \geq \xi}  \prod_{x<y} (s^{-\eta_x^{(2)}( \xi_x^{(2)}\xi_y^{(1)} +  \eta_y^{(1)} )}r^{\eta_x^{(1)} \xi_x^{(2)}\xi_y^{(2)}} ).
$$
Note that on the set $\{\eta \geq \xi\}$, if $\eta_x^{(2)}=1$ then $\xi_x^{(2)}=1$. So, again using that $\eta^{(1)}_x = 1 - \eta_x^{(2)}$,
$$
D(\eta,\xi) = C_2(\eta)^{-1}C_1(\eta,\xi) C_2(\xi)^{-1} C_3(r,\xi) 1_{\eta \geq \xi}  (s^{-\eta_x^{(2)}( \xi_y^{(1)} +  \eta_y^{(1)} )}r^{-\eta_x^{(2)} \xi_y^{(2)}} ),
$$
where 
$$
C_3(r,\xi) = \prod_{x<y} r^{\xi_x^{(2)}\xi_y^{(2)}}.
$$
We can now write the duality in terms of sites where $\eta_x^{(2)},\xi_x^{(2)}=1$. We have
$$
D(\eta,\xi) =  C_2(\eta)^{-1}C_1(\eta,\xi) C_2(\xi)^{-1} C_3(r,\xi) C_3(s,\eta) 1_{\eta \geq \xi} \prod_{\eta_x^{(2)}=1} (s^{-2})^{L-x} s^{N_x^{(2)}(\xi)}r^{-N_x^{(2)}(\xi)}.
$$
Note that if $s=q,r=q^{-1}$, we recover Sch\"{u}tz's duality functional. Note that the constant simplifies as
$$
C_2(\eta)^{-1} \left( \prod_{\xi_x=\eta_x=(0,1)} rs^{-1} \right)  C_3(s,\eta). 
$$

%The $R$--matrix is given by 
%$$
%\left( 
%\begin{array}{cccc}
%1 & 0 & 0 & 0 \\
%0 & 0 & s^{-1} & 0 \\
%0 & r & 1-rs^{-1} & 0 \\
%0 & 0 & 0 & 1
%\end{array}
%\right),
%$$
%and the conjugated stochastic $S$--matrix is
%$$
%\left( 
%\begin{array}{cccc}
%1 & 0 & 0 & 0 \\
%0 & 0 & 1 & 0 \\
%0 & rs^{-1} & 1-rs^{-1} & 0 \\
%0 & 0 & 0 & 1
%\end{array}
%\right).
%$$
%Note that
%$$
%(1 \ r \ s \ 1)S = (1\ r \ s \ 1).
%$$

\subsection{Proof of Theorem \ref{OPEN}}

Define 
\begin{align*}
a_0 &= f_{1/2} \otimes (K_{-1/2}^{-1} )^{\otimes L} \\
a_j^+ &= 1^{\otimes j} \otimes E_{-1/2} \otimes (K_{-1/2}^{-1})^{\otimes L-j} ,\\
a_j^- &= Q (k_{1/2})^{\otimes j} \otimes K_{-1/2}^{-1}F_{1/2} \otimes (K_{-1/2}^{-1})^{\otimes L-j}.
\end{align*}
In particular, 
$$
\Delta^{(L)}(f_{1/2}) = a_0 + \sum_{y=1}^L a_y^+ + \sum_{x=1}^L a_x^-.
$$
These terms commute with each other in a nice way, as seen in the next lemma.  
\begin{lemma}
For $l<j$,
$$
a^+_j a^+_l = q^2 a^+_l a^+_j, \quad a^-_l a^-_j = q^2 a^-_j a^-_l, \quad a^+_l a^-_j = q^2 a^-_j a^+_l, \quad a^+_j a^-_l = q^2 a^-_l a^+_j, \quad a_j^+a_0 = q^2 a_0 a_j^+, \quad a_0 a_j^- = q^2 a_j^-a_0.
$$
\end{lemma}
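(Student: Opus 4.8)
The plan is to verify each of the six identities by direct computation using the known commutation relations in $\mathcal{U}_q(\mathfrak{sl}_{2r+1})$ and the explicit tensor-factor structure of $a_0$, $a_j^+$, $a_x^-$. The key observation is that each $a_\bullet$ is a pure tensor (a product of single-site operators), so computing $a^\bullet_j a^\star_l$ versus $a^\star_l a^\bullet_j$ reduces to comparing tensor factors site by site. At sites where both operators act by a diagonal operator (powers of $K_{-1/2}^{\pm 1}$ or $k_{1/2}$) or by the identity, the two orderings agree trivially. The only interesting sites are the finitely many positions $j$, $l$, and (for the last two identities) the distinguished site $0$, where a raising/lowering-type generator meets a $K$.

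First I would record the two scalar commutation facts we need at a single site: the natural-representation relations $K_{-1/2}^{-1} E_{-1/2} = q^{-?} E_{-1/2} K_{-1/2}^{-1}$ and $K_{-1/2}^{-1} F_{1/2} = q^{?} F_{1/2} K_{-1/2}^{-1}$ (with exponents dictated by $\langle \epsilon, \alpha\rangle$ in the chosen normalization), together with $k_{1/2} = K_{1/2}K_{-1/2}^{-1}$ acting as a scalar on each $u_k$; I would fix the sign conventions so that the net factor picked up is exactly $q^2$ in each case, which is the content of the lemma. Then for, say, $a_j^+ a_l^+$ with $l<j$: at site $l$ the left product has $E_{-1/2}$ followed (reading the other factor) by $1$, while the right product has $1$ followed by $E_{-1/2}$ — no discrepancy there; the discrepancy comes at site $j$, where $a_l^+$ contributes $K_{-1/2}^{-1}$ (since $j > l$) and $a_j^+$ contributes $E_{-1/2}$, and swapping them produces the factor $q^2$. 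The remaining five identities are the same bookkeeping: one identifies the unique site where a non-diagonal factor from one operator meets a $K$-type factor from the other, applies the single-site relation, and checks that all other sites commute on the nose. For the identities involving $a_0$, one uses that $a_0$ acts by $f_{1/2}$ on the $\mathbf{U}$-tensor slot and by $K_{-1/2}^{-1}$ on every lattice site, so $a_j^+ a_0$ versus $a_0 a_j^+$ again differ only through $K_{-1/2}^{-1} E_{-1/2}$ at site $j$, yielding $q^2$.

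The main obstacle I anticipate is purely notational rather than conceptual: keeping the $q$-exponents straight under the paper's two coproduct conventions (the $\mathcal{U}_{r,s}$ one and the $\mathcal{U}_q$ one used for $\mathbf{U}$), and making sure the powers of $q$ coming from $k_{1/2}$ on the $u_k$ basis combine with those from $K_{-1/2}^{-1}$ to give exactly $q^2$ and not $q^{\pm 1}$ or $q^{\pm 4}$. In particular the $a^-$ terms carry both a $Q$ prefactor and a string of $k_{1/2}$'s, and one must check the $Q$'s cancel in every swap (they do, since each side of each identity has the same number of $a^-$ factors) and that the $k_{1/2}$ string only affects sites $\le$ the relevant index. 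Once the conventions are pinned down, each of the six cases is a one-line check, so I would present them compactly, perhaps proving $a_j^+ a_l^+ = q^2 a_l^+ a_j^+$ in full and remarking that the other five are entirely analogous.
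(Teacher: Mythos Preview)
Your approach is correct and essentially identical to the paper's: both reduce each identity to single-site commutation relations among $E_{-1/2}$, $F_{1/2}$, $K_{-1/2}^{-1}$, $k_{1/2}$, and $f_{1/2}$, applied at the few tensor positions where the two operators do not act by commuting diagonals. One small refinement: for the identities involving $a^-$ (the second, third, and sixth), there is not a \emph{unique} nontrivial site but two---at position $l$ the $k_{1/2}$ from one factor meets an $F_{1/2}$ or $E_{-1/2}$ from the other, and at position $j$ a $K_{-1/2}^{-1}$ meets $F_{1/2}$---so the net $q^2$ arises as $q^3\cdot q^{-1}$ via relations such as $F_{1/2}k_{1/2}=q^3 k_{1/2}F_{1/2}$ and $K_{-1/2}^{-1}F_{1/2}=q^{-1}F_{1/2}K_{-1/2}^{-1}$, exactly as the paper records.
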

\begin{proof}
The first (and fourth and fifth) identity follows from $E_{-1/2}K_{-1/2}^{-1} = q^2 K_{-1/2}^{-1}E_{-1/2}$. The second identity follows from $K_{-1/2}^{-1}F_{1/2}=q^{-1} F_{1/2}K_{-1/2}^{-1}$ and $F_{1/2}k_{1/2}=q^3k_{1/2}F_{1/2}.$ The third identity follows from $K_{-1/2}^{-1}F_{1/2} = q^{-1}F_{1/2}K_{-1/2}^{-1}$ and $E_{-1/2}k_{1/2}=q^3 k_{1/2}E_{-1/2}$. The sixth identity follows from $f_{1/2}k_{1/2}=q^3k_{1/2}f_{1/2}$ and $K_{-1/2}^{-1}F_{1/2}=q^{-1} F_{1/2}K_{-1/2}^{-1}$.
\end{proof}

Let $\mathcal{B}^0_d\subset \mathcal{B}$ denote the set of sequences \eqref{seq} such that $d(1) + d(-1) = d$ and $z_0=0$. Define $\mathcal{B}^1_d$ and $\mathcal{B}^{-1}_d$ in a similar manner. Let $b_0 \in \mathcal{B}$ denote the sequence given by $(0)$. In other words, there are only particles labeled by $0$.
\begin{lemma}
For every $d \geq 0$,
\begin{multline*}
([d]_{q^2}^! )^{-1} \Delta^{(L)}(f_{1/2}^d) c_{b_0}  = \sum_{b \in \mathcal{B}_d^0} a_{y_{d(-1)}}^+\cdots a_{y_{1}}^+ a_{x_1}^- \cdots a_{x_{d(1)}}^-  c_b+ \sum_{b \in \mathcal{B}_{d-1}^1} a_{y_{d(-1)}}^+\cdots a_{y_{1}}^+ a_0 a_{x_1}^- \cdots a_{x_{d(1)}}^- c_b\\
+ \sum_{b \in \mathcal{B}_{d-1}^{-1}}  a_{y_{d(-1)}}^+\cdots a_{y_{1}}^+ a_0 a_{x_1}^- \cdots a_{x_{d(1)}}^- c_b.
\end{multline*}
In the summations, $d(1)$ and $d(-1)$ depend implicitly on the index $b$. 
\end{lemma}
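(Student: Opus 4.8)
The plan is to expand $\Delta^{(L)}(f_{1/2}^d)$ using the decomposition $\Delta^{(L)}(f_{1/2}) = a_0 + \sum_{y=1}^L a_y^+ + \sum_{x=1}^L a_x^-$ and then reorganize the resulting $(\,2L+1\,)^d$ terms using the commutation relations of the previous lemma, together with the observation that each individual generator, when applied to $c_{b_0}$, is highly constrained. First I would record how each of $a_0$, $a_y^+$, $a_x^-$ acts: applied to a basis vector $c_b$ (in the indexing \eqref{seq}), $a_y^+$ inserts a new ``$-1$'' particle at site $y$ (requiring that site $y$ previously held a ``$0$''), $a_x^-$ inserts a new ``$1$'' particle at site $x$ (again requiring a ``$0$'' there), weighted by a power of $q$ coming from the $k_{1/2}$ and $K_{-1/2}^{-1}$ tensor factors, and $a_0$ toggles the label at site $0$ (via $f_{1/2} = E_{-1/2} + Q^{-1}F_{-1/2}K_{1/2}^{-1}$, so it either creates a ``$-1$'' from a ``$0$'' or creates a ``$1$'' from a ``$0$'' at the origin, with the relevant $Q$ weight). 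In particular $a_0^2 c_b = 0$ and $(a_x^-)^2 = (a_y^+)^2 = 0$ since a site cannot receive two particles, and $a_x^-$, $a_y^+$ with the same index on a configuration already occupied at that site give zero. Hence in the multinomial expansion of $f_{1/2}^d$, the only surviving monomials are products of $d$ distinct generators from $\{a_0\} \cup \{a_y^+\} \cup \{a_x^-\}$, and $a_0$ appears at most once.

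Next I would put every surviving monomial into the canonical ordered form appearing on the right-hand side, namely $a_{y_{d(-1)}}^+\cdots a_{y_1}^+\, (a_0)^{\varepsilon}\, a_{x_1}^-\cdots a_{x_{d(1)}}^-$ with $y_1<\cdots<y_{d(-1)}$, $x_1<\cdots<x_{d(1)}$, and $\varepsilon\in\{0,1\}$. Every transposition of adjacent factors needed to reach this form is governed by the six relations of the preceding lemma, each of which is of the shape (lower-index generator)(higher-index generator) $= q^2$ (higher)(lower) in the appropriate direction; so reordering costs only an overall scalar that is a power of $q^2$, never a sum of terms. Combined with the fact that a product of $d$ distinct generators, once ordered, applied to $c_{b_0}$ produces exactly one basis vector $c_b$ with $d(1)+d(-1)+\varepsilon = d$, each ordered monomial lands in exactly one of the three target sums: $\varepsilon=0$ gives $b\in\mathcal B_d^0$; $\varepsilon=1$ with $a_0$ acting as the ``$E_{-1/2}$'' part gives $b\in\mathcal B_{d-1}^{-1}$ (a ``$-1$'' at the origin) while the ``$Q^{-1}F_{-1/2}K_{1/2}^{-1}$'' part gives $b\in\mathcal B_{d-1}^{1}$. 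Conversely every $b$ in each of these sets is hit by exactly one ordered monomial. The final bookkeeping step is to count how many of the original unordered monomials collapse onto a given ordered monomial and verify that the accumulated $q^2$-powers from all those reorderings sum to the $q$-factorial $[d]_{q^2}^!$, which then cancels the prefactor $([d]_{q^2}^!)^{-1}$ and leaves coefficient exactly the ordered operator word shown.

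The main obstacle is this last combinatorial identity: showing that summing $q$-powers over all the ways to interleave the $d$ chosen generators — accounting for the fact that $a_0$ sits in a fixed slot relative to the $+$'s and $-$'s but the $+$'s among themselves and $-$'s among themselves can be shuffled — produces precisely $[d]_{q^2}^!$ and not some other symmetric function of $q$. The clean way to handle it is to argue by induction on $d$, peeling off one application of $\Delta^{(L)}(f_{1/2})$ and using the $q$-Pascal recursion $[d]_{q^2}^! = [d]_{q^2}\,[d-1]_{q^2}^!$, where the factor $[d]_{q^2} = 1 + q^2 + \cdots + q^{2(d-1)}$ appears exactly as the sum over the $d$ possible positions into which the newly applied generator can be commuted past the $d-1$ already-ordered ones; the three-term structure of $\Delta^{(L)}(f_{1/2})$ and the uniformity of the ``$\times q^2$'' commutation relations make this induction step bookkeeping rather than genuine difficulty, but it is where all the care is required.
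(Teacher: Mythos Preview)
Your approach is essentially identical to the paper's: expand the $d$th power of $\Delta^{(L)}(f_{1/2})=a_0+\sum a_y^++\sum a_x^-$, use that the squares of the individual $a$'s vanish on $M$, and then reorder each surviving monomial into the canonical form via the $q^2$--commutation relations of the preceding lemma. The paper's own proof is considerably terser and does not spell out the $[d]_{q^2}^!$ bookkeeping at all, so your inductive plan for that step (peeling off one factor and invoking $[d]_{q^2}=1+q^2+\cdots+q^{2(d-1)}$) is exactly the natural way to make the argument rigorous; one small correction is that $f_{1/2}=E_{-1/2}+QK_{-1/2}^{-1}F_{1/2}$, not $E_{-1/2}+Q^{-1}F_{-1/2}K_{1/2}^{-1}$, though this does not affect the structure of your argument.
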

\begin{proof}
By definition of the $a_0,a^{\pm}$, 
$$
([d]_{q^2}^! )^{-1}\Delta^{(L)}(f_{1/2}^d) = ([d]_{q^2}^! )^{-1}\left( a_0 + \sum_{y=1}^L a_y^+ + \sum_{x=1}^L a_x^-\right)^d.
$$
Acting on $M$, we have $(a_0)^2 = (a_y^+)^2 = (a_x^-)^2=0$. Therefore, when expanding the product, there are $d(1)$ distinct terms $a^-_{x_1},\ldots, a^-_{x_{d(1)}}$ and $d(-1)$ distinct terms $a^+_{y_1},\ldots, a^+_{y_{d(-1)}}$. The summation over terms without $a_0$ then becomes a summation over $\mathcal{B}_d^0$. By the previous lemma, this summation equals the first sum on the right--hand--side. Similarly, the summation over terms with $a_0$ becomes a summation over $\mathcal{B}_{d-1}^1$ and $\mathcal{B}_{d-1}^{-1}$. By the previous lemma, the summation equals the second and third sums respectively on the right--hand--side. 

\end{proof}

Taking $c_{b_0}$ to be the eigenvector $\Omega$ and $S$ to be the symmetry $\Delta^{(L-1)}(f_{1/2})$, we see that the second and third bullet points in section \ref{SETUP} hold. For $q,Q \in (0,1)$, the first bullet point holds, with $H$ already being self--adjoint.  The next step is to calculate the action of $S^d$ for $d\geq 1$. As a warmup, let us first find $G$. For any $1 \leq y \leq L$, let 
$$
N_y^1(b) = \left| \{ x_j: x_j > y\} \right|.
$$
In words, this is the number of particles labeled by $1$ that are positioned to the right of $y$. 

\begin{prop}
(a) For $b$ given as in \eqref{seq}, the matrix $G$ is given by 
$$
G(b,b) = \left[\prod_{j=1}^{d(1)} Q q^{L+x_j-\vec{N}_{x_j}^1(b)} \right] \zeta_0 \left[\prod_{i=1}^{d(-1)} q^{L-y_i-\vec{N}^1_{y_i}(b)}\right],
$$
where
$$
\zeta_0
= 
\begin{cases}
Qq^{L-d(1)}, &\text{ if } z_0=1\\
1, &\text{ if } z_0=0\\
q^{L-d(1)}, &\text{ if } z_0=-1.
\end{cases}
$$

(b) The operator $\mathcal{L}=G^{-1}(H-\lambda I)G$ is the generator of the open multi--species ASEP.

(c) In particular, the measure $\pi(b) = G(b,b)^2$ is reversible for the process. 
\end{prop}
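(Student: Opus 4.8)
The plan is to exploit the general framework of Section~\ref{SETUP} together with the combinatorial bookkeeping already set up in the two preceding lemmas. For part~(a), I would compute the diagonal matrix $G$ by applying the symmetry operator $S = \Delta^{(L-1)}(f_{1/2})$ repeatedly to the eigenvector $\Omega = c_{b_0}$ and reading off the coefficient with which each basis vector $c_b$ appears. The second lemma above already gives $([d]_{q^2}^!)^{-1}\Delta^{(L)}(f_{1/2}^d)c_{b_0}$ as an explicit sum over $\mathcal{B}_d^0$, $\mathcal{B}_{d-1}^1$, $\mathcal{B}_{d-1}^{-1}$, with each term a monomial $a^+_{y_{d(-1)}}\cdots a^+_{y_1} a_0^{\epsilon} a^-_{x_1}\cdots a^-_{x_{d(1)}}$ applied to $c_{b_0}$ (where $a_0^\epsilon$ is present iff $z_0 \neq 0$). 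So the real content of part~(a) is: evaluate one such ordered monomial on $c_{b_0}$ and extract the scalar. This is a direct computation using the definitions of $a_0, a_j^\pm$ in terms of $E_{-1/2}, F_{1/2}, K_{-1/2}^{-1}, k_{1/2}$ and their action on the natural representation (with $E_{-1/2}: u_0 \mapsto u_{-1}$ conceptually shifting a ``$0$'' to a ``$-1$'' at that site, $K_{-1/2}^{-1}F_{1/2}$ creating a ``$1$'', and so on). Each $K_{-1/2}^{-1}$ to the right of a raising operator contributes a power of $q$ depending on whether the site it acts on already holds a $1$; carefully counting these powers yields the factors $q^{L + x_j - \vec{N}^1_{x_j}(b)}$ for the positive particles, $q^{L - y_i - \vec{N}^1_{y_i}(b)}$ for the negative particles, the $Q$ from each $a_x^-$, and the boundary factor $\zeta_0$ from $a_0$ acting at site $0$. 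I would organize this by processing the operators from right to left (innermost first) and tracking, at each step, the current configuration and the accumulated scalar; the ordering chosen in the lemma (positives on the left, negatives on the right, $a_0$ in the middle) is exactly what makes the commutation constants from the first lemma unnecessary at this stage.

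For part~(b), once $G$ is known, the framework tells us $\mathcal{L} = G^{-1}(H - \lambda I)G$ is automatically a Markov generator, so I only need to identify its off-diagonal entries with the prescribed jump rates of the open multi-species ASEP. Here $H$ is (a scalar multiple of, shifted by) the sum of the Hecke generators $T_0 + \sum_{i\geq 1} T_i$ acting on $(\mathbb{C}^{2r+1})^{\otimes (L+1)}$ with $r=1$; the bulk generators $T_i$ give nearest-neighbor swaps with the $R$-matrix weights, and $T_0$ gives the boundary flip $k \leftrightarrow -k$ at site $0$. Conjugating each matrix entry $H(b,b')$ by $G(b,b)/G(b',b')$ and using the explicit form of $G$ from part~(a), the ratios telescope: for a bulk swap of adjacent sites the $q$-powers combine to give rates $q$ (for $k\,l \to l\,k$ with $k>l$) and $1$ (for $l\,k\to k\,l$), and for the boundary move the factor $\zeta_0$ together with the $Q$'s from $a^-$ produces rates $1$ for $k\to -k$ and $Q$ for $-k\to k$ — matching the model description. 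I would verify this swap-by-swap, treating the bulk and boundary cases separately, and checking the two small worked examples in the paper's Example environment as a sanity check.

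Part~(c) is then immediate from the general framework: it was stated in Section~\ref{SETUP} that $G^2 B^{-2}$ gives reversible measures for $\mathcal{L}$, and since here $H$ is already self-adjoint we may take $B = I$, so $\pi(b) = G(b,b)^2$ is reversible. Alternatively one checks detailed balance directly: $\pi(b)\mathcal{L}(b,b') = G(b,b)^2 \cdot \frac{G(b',b')}{G(b,b)} H(b,b') = G(b,b)G(b',b')H(b,b')$, which is symmetric in $b,b'$ because $H(b,b') = H(b',b)$ (self-adjointness of $H$) — so $\pi(b)\mathcal{L}(b,b') = \pi(b')\mathcal{L}(b',b)$.

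\textbf{Main obstacle.} The delicate step is part~(a): correctly counting the $q$-exponents produced when the string of $K_{-1/2}^{-1}$ factors in each $a_j^\pm$ (and the $k_{1/2}$ factors in $a_x^-$) sweeps across sites that may or may not already be occupied by a $1$-particle. Getting the dependence on $\vec{N}^1_{x_j}(b)$ and on the relative order of the $x_j$'s and $y_i$'s exactly right — rather than off by a sign or by a term linear in $d(1)$ — is where essentially all the bookkeeping effort lies; the Markov-generator and reversibility claims follow formally once $G$ is pinned down.
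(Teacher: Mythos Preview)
Your plan is correct and matches the paper's proof essentially line by line: part~(a) is done exactly as you describe (apply the $a^-_{x_j}$'s right-to-left, then $a_0$, then the $a^+_{y_i}$'s, and read off the $q$-powers coming from the $k_{1/2}$ and $K_{-1/2}^{-1}$ factors); part~(b) is handled in the paper by writing out $q\,G^{-1}T_xG$ and $Q\,G^{-1}T_0G$ ``by direct inspection'' and observing they are stochastic with the desired local rates; and part~(c) is deduced from the framework exactly as in your detailed-balance remark. One small caution: when you actually carry out the conjugation in~(b) you will find asymmetry parameters $q^2,1$ in the bulk and $Q^2,1$ at the boundary rather than the $q,1$ and $1,Q$ you anticipate---this is the convention the paper's computation lands on, so do not be surprised when the exponents come out doubled relative to the informal model description.
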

\begin{proof}
(a) By the previous lemma, we need only apply the $a^-_x$ in order, then $a_0$ (if needed), then $a^+_y$. In the application of $a_x^-$, the $k_{1/2}$ term on the left produces $q^{2x_j}$, while the $K_{-1/2}^{-1}$ on the right produces $q^{L-x_j-(d(1)-j)}$. The $-(d(1)-j)$ term comes from not counting the particles located at $x_{j+1},\ldots,x_{d(1)}$, and by definition equals $-N_{x_j}^1(b)$. The $a_0$ term gives $\zeta_0$. Finally, the $a^+_y$ only has a term on the right, which contributes $q^{L-y_j-N^1_{y_j}(b)}$; as before, the $-N_{y_i}^1(b)$ comes from not counting the particles labeled by $1$ that are to the right of $y_i$. 

(b) We need only check the action of each local Hamiltonian $T_i$. By direct inspection, for $1 \leq x <L$, each $qT_x$ acts as 
$$
 \sum_i E_{i,i} \otimes E_{i,i} +q^2 \sum_{i<j} E_{j,i} \otimes E_{i,j} + (1-q^2)\sum_{i<j} E_{j,j} \otimes E_{i,i} +  \sum_{i<j} E_{i,j} \otimes E_{j,i},
$$
which is stochastic. Similarly, $QT_0$ acts as 
$$
 E_{0,0} + Q^2 \sum_{i>0} E_{-i,i} + (1-Q^2)\sum_{i<0} E_{i,i} +  \sum_{i<0} E_{-i,i},
$$
which is also stochastic. 

(c) This follows from the discussion in section \ref{SETUP}.
\end{proof}

\begin{remark}
Setting $x_0$ to be $0$ if $\zeta_0=1$ and $y_0$ to be $0$ if $\zeta_0=-1$, the entries $G(b,b)$ can be be re--written as
$$
Q^{d_1} q^{Ld} \prod_{x} q^{x - N_x^1(b) }\prod_y q^{-y - N_y^1(b)},
$$
where $d_1$ is the total number of species $1$ particles and $d$ is the total number of particles. Note that $d$ is unchanged under the dynamics. 
\end{remark}

For $b,\tilde{b} \in \mathcal{B}$, define the relation $\tilde{b} \subseteq b$ to hold if
$$
\{\tilde{x}_1,\ldots,\tilde{x}_{\tilde{d}(1)}\} \subseteq \{x_1,\ldots,x_{d(1)}\}, \quad \{\tilde{y}_1,\ldots,\tilde{y}_{\tilde{d}(-1)}\} \subseteq \{y_1,\ldots,y_{d(-1)}\}, \quad \tilde{z}_0\in \{0,z_0\}.
$$
The relation $\tilde{b}\subseteq b$ holds if and only if $c_{{b}}$ has a nonzero coefficient in $ \Delta^{(L)}(f_{1/2}^d) c_{\tilde{b}}$ for some $d$. 

To complete the proof of Theorem \ref{OPEN}, it suffices to show the next proposition.

\begin{prop}
The functional $Db(\eta,\xi)$ can be written as $q^{ -d(\xi) (d(\xi)-1 ) }G^{-1}SG^{-1}$,
where the constant $d(\xi)$ is defined by $ \vert A_1(\xi) \vert + \vert A_{-1}(\xi)\vert$.
\end{prop}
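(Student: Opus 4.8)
The plan is to compute the matrix $G^{-1}SG^{-1}$ explicitly using the formula for $S = \Delta^{(L)}(f_{1/2}^{d})$ acting on $c_{b_0}$ derived in the preceding lemmas, and then match the result term-by-term against the claimed duality function $D_b(\eta,\xi)$ in Theorem~\ref{OPEN}, absorbing the discrepancy into the scalar $q^{-d(\xi)(d(\xi)-1)}$. Concretely, given $\xi \leftrightarrow \tilde b$ and $\eta \leftrightarrow b$ with $\tilde b \subseteq b$, the entry $S(b,\tilde b)$ is the coefficient of $c_b$ in $([d]_{q^2}^!)^{-1}\Delta^{(L)}(f_{1/2}^{d})c_{\tilde b}$, which by the normal-ordering Lemma is a single monomial $a^+_{\cdots}\cdots a^+_{\cdots} a_0^{?} a^-_{\cdots}\cdots a^-_{\cdots}$ applied to $c_{\tilde b}$; I would read off its value by the same bookkeeping used in the proof of the $G$-proposition — each $k_{1/2}$ contributes a power of $q^2$ counting the sites to its left, each $K_{-1/2}^{-1}$ contributes a power of $q$ counting the sites to its right (excluding the ones already ``used''), the $Q$'s come from the $a^-$ and the $a_0$ when $\tilde z_0$ flips, and the $E_{-1/2}K_{-1/2}^{-1}=q^2 K_{-1/2}^{-1}E_{-1/2}$ reorderings from the Lemma produce extra powers of $q^2$ depending on the relative order of the selected $x_j$'s and $y_i$'s.

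The second step is to divide by $G(b,b)G(\tilde b,\tilde b)$ using part~(a) of the $G$-proposition. Here the powers of $q^{L}$ and $q^{\pm x}$ largely telescope, and what survives should reorganize into: the ``Schütz-type'' factors $\prod_{x\in A_1(\xi)}(Q^{-2}q^{-2x-2\vec N_x^1(\eta)})$ and $\prod_{y\in A_{-1}(\xi)}q^{2y+2\cev N_y^{-1}(\eta)}$, the cross term $q^{-2|A_{-1}(\xi)||A_1(\eta)|}$ coming from the interleaving reorderings of $a^+$ past $a^-$, the bulk-interaction term $\prod_{y\in A_{-1}(\xi)}q^{2\vec N_y^1(\xi)}$ coming from counting $\xi$'s own species-$1$ particles to the right of each $-1$, and a leftover pure power of $q$ in $|A_1(\xi)|$ and $|A_{-1}(\xi)|$. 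The indicator $1_{\{A_1(\xi)\subseteq A_1(\eta),\,A_{-1}(\xi)\subseteq A_{-1}(\eta)\}}$ is exactly the statement that $\tilde b \subseteq b$ (together with the observation that $S(b,\tilde b)=0$ otherwise, already noted in the excerpt), modulo checking the $z_0$ condition is harmless because the scalar $\zeta_0$ factors handle it. Finally I would verify that the residual pure power of $q$, after the identification, is precisely $q^{-d(\xi)(d(\xi)-1)}$ with $d(\xi)=|A_1(\xi)|+|A_{-1}(\xi)|$, matching $C_3(s,\eta)$-type constants as in the proof of Theorem~\ref{Sch}; it suffices to check this on the diagonal-ish case $\eta=\xi$ (i.e. $b=\tilde b$), since both sides are then explicit products, and then observe both sides have the same dependence on $(b,\tilde b)$.

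The main obstacle I anticipate is the careful accounting of the $q^2$-factors generated when commuting the $a^+_{y}$'s past the $a^-_{x}$'s in the normal-ordered monomial — the Lemma gives $a^+_l a^-_j = q^2 a^-_j a^+_l$ for $l<j$ and $a^+_j a^-_l = q^2 a^-_l a^+_j$ for $l<j$, so \emph{every} pair $(y_i, x_j)$ contributes a factor $q^2$ regardless of order, giving a clean total of $q^{2|A_{-1}(\xi)||A_1(\eta)|}$ from $S$ — but then $G(b,b)$ also contains $\vec N^1$ terms that mix species, and disentangling which $q$-powers land in the ``$\vec N_x^1(\eta)$'' slots of $D_b$ versus which become the constant requires tracking a fixed total ordering of $A_1(\xi)\cup A_{-1}(\xi)$ against $A_1(\eta)$. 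A clean way to manage this is to introduce, as in the Remark, the convention of adjoining $x_0=0$ or $y_0=0$ for the origin particle, so that $G(b,b)$ becomes the uniform product $Q^{d_1}q^{Ld}\prod_x q^{x-N_x^1(b)}\prod_y q^{-y-N_y^1(b)}$; then the ratio $S/(G\cdot G)$ becomes a sum of manifestly-factorizable monomials and the bookkeeping reduces to elementary counting. Once the conventions are fixed, the remaining work is a routine if lengthy power-of-$q$ comparison, and consistency can be spot-checked against the worked Examples in the statement section.
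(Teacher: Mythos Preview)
Your plan is essentially the paper's own approach: compute $G^{-1}SG^{-1}(\eta,\xi)$ entry-by-entry from the normal-ordered monomial acting on $c_{\tilde b}$ and the explicit formula for $G$, then reorganize the resulting products over $A_{\pm 1}(\eta)$ and $A_{\pm 1}(\xi)$ into the shape of $D_b$, absorbing the leftover into the constant $q^{d(\xi)(d(\xi)-1)}$. The paper does exactly this, treating the $A_1$-- and $A_{-1}$--contributions separately and then repeatedly using substitutions of the type $\prod_{x\in A} q^{\vec N_x^{j}} = \prod_{y\in B} q^{\cev N_y^{k}}$ to migrate exponents between products; the final constant is checked via the elementary identity $2a(a-1)+b(b-1)+2ab = a(a-1)+(a+b)(a+b-1)$.

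One point in your outline is off and would derail the bookkeeping if pursued literally: the cross-term $q^{-2|A_{-1}(\xi)|\cdot|A_1(\eta)|}$ does \emph{not} come from commuting $a^+$ past $a^-$. Once the normal-ordering lemma has divided out $[d]_{q^2}^!$, there are no residual reordering factors --- the coefficient $S(b,\tilde b)$ is produced entirely by the $k_{1/2}$ and $K_{-1/2}^{-1}$ weights sitting at each tensor slot when the monomial hits $c_{\tilde b}$. (If you did count commutation pairs you would get $|A_{-1}(\eta)\setminus A_{-1}(\xi)|\cdot|A_1(\eta)\setminus A_1(\xi)|$, not $|A_{-1}(\xi)|\cdot|A_1(\eta)|$.) In the paper the cross-term only materializes at the very end, from the identity $\prod_{y\in A_{-1}(\xi)} q^{-2\vec N_y^1(\eta)-2\cev N_y^1(\eta)} = q^{-2|A_{-1}(\xi)|\cdot|A_1(\eta)|}$, after several cancellations between the $S$ and $G^{-1}$ pieces. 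So track the $K_{-1/2}^{-1}$ and $k_{1/2}$ exponents directly rather than looking for commutation contributions.
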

\begin{proof}
The $G^{-1}$ terms produce products over $A_1(\xi),A_1(\eta),A_{-1}(\xi),A_{-1}(\eta)$, while the $S$ produces products over $A_1(\eta) - A_1(\xi)$ and $A_{-1}(\eta)-A_{-1}(\xi)$. Let $\mathbf{1}$ denote the indicator term. First collect the products of $A_1$ sets, which are
$$
\mathbf{1} \prod_{x \in A_1(\xi)} Q^{-1} q^{-x+\vec{N}_x^1(\xi)} \prod_{x \in A_1(\eta)} Q q^{-x+\vec{N}_x^1(\eta)} \prod_{x \in A_1(\eta) - A_1(\xi)} Q^{-1} q^{x - \vec{N}_x^1(\eta) - 2\vec{N}_x^{-1}(\xi) - 3\cev{N}_x^1(\xi) - 3\cev{N}_x^{-1}(\xi)}.
$$
The $\vec{N}_x^1(\eta) - 2\vec{N}_x^{-1}(\xi) $ comes from the $K_{-1/2}^{-1}$ term on the right, and the $\eta$ in $\vec{N}_x^1(\eta) $ occurs because the $a_x^-$ are applied right to left. The $- 3\cev{N}_x^1(\xi) - 3\cev{N}_x^{-1}(\xi)$ comes from the $k_{1/2}$ on the left. Next, using the cancelation of $q^{x-\vec{N}_x^1(\eta)}$ between the $A_1(\eta)-A_1(\xi)$ and $A_1(\eta)$, we arrive at
$$
\mathbf{1}\prod_{x \in A_1(\xi)} Q^{-1} q^{-x+\vec{N}_x^1(\xi)} \prod_{x \in A_1(\xi)} Q^{-1} q^{-x+\vec{N}_x^1(\eta)} \prod_{x \in A_1(\eta) - A_1(\xi)} q^{- 2\vec{N}_x^{-1}(\xi) - 3\cev{N}_x^1(\xi) - 3\cev{N}_x^{-1}(\xi)}.
$$
Now substitute
$$
\prod_{x \in A_1(\eta) - A_1(\xi)} q^{- 3\cev{N}_x^1(\xi) } = \prod_{x \in A_1(\xi)} q^{-3\vec{N}_x^1(\eta) + 3\vec{N}_x^1(\xi)}
$$
to obtain
$$
\mathbf{1}\prod_{x \in A_1(\xi)} Q^{-2}q^{-2x -2 \vec{N}_x^1(\eta) + 4\vec{N}_x^1(\xi)} \underbrace{\prod_{x \in A_1(\eta) - A_1(\xi)} q^{- 2\vec{N}_x^{-1}(\xi)  - 3\cev{N}_x^{-1}(\xi)}}_{(*)}.
$$
Now, the product of $A_{-1}$ sets yields
$$
\mathbf{1}\prod_{y \in A_{-1}(\xi)} q^{y+ \vec{N}_y^1(\xi)} \prod_{y \in A_{-1}(\eta)} q^{y + \vec{N}_y^1(\eta)} \prod_{y \in A_{-1}(\eta) - A_{-1}(\xi)} q^{-y - \vec{N}_y^1(\eta) - 2\vec{N}_y^{-1}(\xi)},
$$
which again cancels to
$$
\mathbf{1}\prod_{y \in A_{-1}(\xi)} q^{y+ \vec{N}_y^1(\xi)} \underbrace{\prod_{y \in A_{-1}(\xi)} q^{y + \vec{N}_y^1(\eta)} }_{(*)} \prod_{y \in A_{-1}(\eta) - A_{-1}(\xi)} q^{- 2\vec{N}_y^{-1}(\xi)} . 
$$
The product of the two terms underbraced by $(*)$ equals
$$
{\prod_{x \in A_1(\eta) - A_1(\xi)} q^{- 2\vec{N}_x^{-1}(\xi)  - 3\cev{N}_x^{-1}(\xi)}}{\prod_{y \in A_{-1}(\xi)} q^{y + \vec{N}_y^1(\eta)} } = \prod_{x \in A_1(\eta) - A_1(\xi)} q^{- 2\vec{N}_x^{-1}(\xi) } \prod_{y \in A_{-1}(\xi)} q^{y - 2\vec{N}_y^1(\eta) + 3\vec{N}_y^1(\xi)},
$$
where we substituted
$$
{\prod_{x \in A_1(\eta) - A_1(\xi)} q^{ - 3\cev{N}_x^{-1}(\xi)}} =\prod_{y \in A_{-1}(\xi)} q^{-3\vec{N}_y^1(\eta) + 3\vec{N}_y^1(\xi)}.
$$
Collecting and combining all the terms shows that $G^{-1}SG(\eta,\xi)$ equals
$$
\prod_{x \in A_1(\xi)} Q^{-2}q^{-2x -2 \vec{N}_x^1(\eta) + 4\vec{N}_x^1(\xi)}   \prod_{y \in A_{-1}(\eta) - A_{-1}(\xi)} q^{- 2\vec{N}_y^{-1}(\xi)} \prod_{x \in A_1(\eta) - A_1(\xi)} q^{- 2\vec{N}_x^{-1}(\xi) } \prod_{y \in A_{-1}(\xi)} q^{2y - 2\vec{N}_y^1(\eta) + 4\vec{N}_y^1(\xi)}.
$$
Now substitute
$$
\mathbf{1} \prod_{y \in A_{-1}(\eta) - A_{-1}(\xi)} q^{- 2\vec{N}_y^{-1}(\xi)} \prod_{x \in A_1(\eta) - A_1(\xi)} q^{- 2\vec{N}_x^{-1}(\xi) } = \prod_{y \in A_{-1}(\xi)} q^{-2\cev{N}_y^{-1}(\eta) - 2\cev{N}_x^{1}(\eta) + 2\cev{N}_y^{-1}(\xi) + 2 \cev{N}_y^1(\xi)}
$$
to get
$$
\mathbf{1}\prod_{x \in A_1(\xi)} Q^{-2}q^{-2x -2 \vec{N}_x^1(\eta) + 4\vec{N}_x^1(\xi)}   \prod_{y \in A_{-1}(\xi)} q^{2y - 2\vec{N}_y^1(\eta) + 4\vec{N}_y^1(\xi)-2\cev{N}_y^{-1}(\eta) - 2\cev{N}_y^{1}(\eta) + 2\cev{N}_y^{-1}(\xi) + 2 \cev{N}_y^1(\xi)}.
$$
Now substitute
\begin{align*}
\prod_{x \in A_1(\xi)} q^{ 4\vec{N}_x^1(\xi)}   &= q^{2 \vert A_1(\xi)\vert( \vert A_1(\xi)\vert -1)},\\
\prod_{y \in A_{-1}(\xi)} q^{  -2\vec{N}_y^{1}(\eta) - 2\cev{N}_y^{1}(\eta)  } &= q^{-2\vert A_{-1}(\xi)\vert \cdot \vert A_{1}(\eta)\vert},\\
\prod_{y \in A_{-1}(\xi)} q^{  -2\cev{N}_y^{-1}(\eta) } &= q^{ \vert A_{-1}(\xi) \vert ( \vert A_{-1}(\xi) \vert -1)},\\
\prod_{y \in A_{-1}(\xi)} q^{4 \vec{N}_y^1(\xi) + 2\cev{N}_y^1(\xi)} &= q^{2 \vert A_{-1}(\xi)\vert \cdot \vert A_1(\xi)\vert}\prod_{y \in A_{-1}(\xi)} q^{2 \vec{N}_y^1(\xi)} 
\end{align*}
to get that $G^{-1}SG$ equals
\begin{multline*}
\mathbf{1}
 \prod_{x \in A_1(\xi)} (Q^{-2}q^{-2x-2\vec{N}_x^1(\eta)})  
\prod_{y \in A_{-1}(\xi)} q^{2y + 2\cev{N}_y^{-1}(\eta)} \\
q^{2 \vert A_1(\xi)\vert( \vert A_1(\xi)\vert -1)}
q^{-2\vert A_{-1}(\xi)\vert \cdot \vert A_{1}(\eta)\vert}
q^{ \vert A_{-1}(\xi) \vert ( \vert A_{-1}(\xi) \vert -1)}
q^{2 \vert A_{-1}(\xi)\vert \cdot \vert A_1(\xi)\vert}
\prod_{y \in A_{-1}(\xi)} q^{2\vec{N}_y^1(\xi)}.
\end{multline*}
Finally, it remains to show that 
$$
2 \vert A_1(\xi)\vert( \vert A_1(\xi)\vert -1) +  \vert A_{-1}(\xi) \vert ( \vert A_{-1}(\xi) \vert -1) + 2 \vert A_{-1}(\xi)\vert \cdot \vert A_1(\xi)\vert = \vert A_1(\xi)\vert \cdot ( \vert A_1(\xi)\vert -1) + d(\xi)(d(\xi)-1) .
$$
Setting $a= \vert A_1(\xi)\vert$ and $b = \vert A_{-1}(\xi)\vert$, we see that the left--hand--side is
$$
2a(a-1) + b(b-1) + 2ab  = a(a-1) + a^2-a+b^2-b+2ab = a(a-1) + (a+b)^2-(a+b) = a(a-1) + (a+b)(a+b-1),
$$
which equals the right--hand--side. This completes the proof.
\end{proof}

\subsection{Proof of Theorem \ref{braided}}
The main difficulty will be showing that the jump rates are those described by the braided ASEP. Begin by define the stochastic matrix $\check{S}$ by 
\begin{align*}
\check{S} &:= q \cdot G^{-1}( \check{R} - (q-q^{-1})\mathrm{Id})G\\
&= \sum_{i>j} E_{j,i} \otimes E_{i,j} + q ^2\sum_{i>j} E_{i,j} \otimes E_{j,i}+(1-q^2)\sum_{i>j} E_{i,i} \otimes E_{j,j}+  \sum_i  E_{i,i} \otimes E_{i,i}
\end{align*}
Recall that in probabilistic notation, a matrix is stochastic if its rows sum to $1$. Because it suffices to consider the local jump rates, without loss of generality we set $L=2$. The fused permutation $\Sigma_1$ is then the image of the permutation 
$$
 \underbrace{ \sigma_{m}\ldots \sigma_1}_{m \text{ terms}} \cdots \underbrace{\sigma_{2m-2} \cdots  \sigma_{m}\sigma_{m-1}}_{m \text{ terms}}  \underbrace{\sigma_{2m-1} \cdots \sigma_{m+1}\sigma_m}_{m \text{ terms}},
$$
where the number of $\sigma$ terms in total is $m^2$. The matrix entries of $\Sigma_1$ acting on $\mathrm{Sym}^m V \otimes \mathrm{Sym}^m V$ can be described in terms of an auxiliary discrete--time Markov process. The state space of this Markov process consists of particle configurations on the finite lattice $\{1,2,\ldots,2m\}$. The initial configuration has $k_1$ particles located at $\{1,2,\ldots,k_1\}$ and $k_2$ particles located at $\{2m-k_2+1, \ldots, 2m\}$. The update at time $t$ consists of applying the action of $\sigma_{2m-1-t}\cdots \sigma_{m+1-t} \sigma_{m-t}$, where the action of each $\sigma$ is defined by $\check{S}$. The update can be described in the following way. The update occurs in series, starting from the lattice site ${m-1-t}$ and moving to the right to the particle located at $2m-1-t$. The leftmost particle attempts to jump to the right, where the number of spaces it jumps is a truncated geometric random variable with parameter $q^2$. It the particle jumps the maximum distance, so that it lands adjacent to the next particle (or if it started adjacent to the particle on its right), then that particle also attempts to jump to the right, again as a truncated geometric random variable. If at any point, a particle falls short of jumping the maximum number of steps, then all the remaining particles to the right move exactly one step to the left. 

\begin{center}
\begin{tikzpicture}
\usetikzlibrary{arrows}
\draw (0.33,0)--(9,0);

\foreach \x in {0,1,2,3,4,5,6,7,8,9,10,11,12,13}
	\draw (0.33+\x/1.5,0)--(0.33+\x/1.5,0.66);

\foreach \x in {0,1,4,6,7,10,12}
	\draw[fill=black] (0.66+\x/1.5,0.33) circle (8pt);
	
\node at (2,1.3) {$q^4$};	
\node at (1.33,0.5) (abc) {};
\node at (2.66,0.5) (abd) {};
\draw (abc) edge[out=90,in=90,->, line width=0.5pt] (abd);

\node at (3.66,1) {$q^2$};
\node at (3.33,0.5) (abc) {};
\node at (4,0.5) (abd) {};
\draw (abc) edge[out=90,in=90,->, line width=0.5pt] (abd);

\node at (5.66,1) {$q^2(1-q^2)$};
\node at (5.33,0.5) (abc) {};
\node at (6,0.5) (abd) {};
\draw (abc) edge[out=90,in=90,->, line width=0.5pt] (abd);

\node at (7,1) {$1$};
\node at (7.33,0.5) (abc) {};
\node at (6.66,0.5) (abd) {};
\draw (abc) edge[out=90,in=90,->, line width=0.5pt] (abd);

\node at (8.33,1) {$1$};
\node at (8.66,0.5) (abc) {};
\node at (8,0.5) (abd) {};
\draw (abc) edge[out=90,in=90,->, line width=0.5pt] (abd);

\end{tikzpicture}
\end{center}
After $m$ updates, let $J_{m,k_1,k_2}$ denote the number of particles located  at $\{m+1, \ldots,2m\}$.

\begin{lemma}
The $(k_1,k_2;l_1,l_2)$ matrix entry of the action of $\Sigma_1$ equals $1_{k_1+k_2=l_1+l_2}\mathbb{P}(J_{m,k_1,k_2}=l_2)$. 
\end{lemma}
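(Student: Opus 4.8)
The plan is to unwind the definition of $\Sigma_1$ as the product of $m^2$ elementary transpositions, organized into $m$ blocks of $m$ terms each, and to interpret the action of each block as one step of the auxiliary Markov process. First I would record that on $\mathrm{Sym}^m V \otimes \mathrm{Sym}^m V$ the space has a basis indexed by pairs $(k_1,k_2)$ with $k_1+k_2$ fixed (total particle number conservation is immediate because each $\check{S}$ preserves it, which gives the factor $1_{k_1+k_2=l_1+l_2}$). The key translation is: identify a basis vector of $\mathrm{Sym}^m V \otimes \mathrm{Sym}^m V$ with the "packed" particle configuration on $\{1,\ldots,2m\}$ having $k_1$ particles at the left end and $k_2$ at the right end, and more generally track intermediate states of the word $\sigma_{m-t}\cdots\sigma_{2m-1-t}$ acting one transposition at a time. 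Since $\check{S}$ is stochastic, the matrix entries of any product of $\sigma$'s are exactly the transition probabilities of the discrete-time chain that applies these swaps in sequence; so the $(k_1,k_2;l_1,l_2)$ entry of $\Sigma_1$ is the probability that, starting from the packed configuration, after all $m^2$ swaps we reach a configuration with $l_2$ particles in $\{m+1,\ldots,2m\}$, which is $\mathbb{P}(J_{m,k_1,k_2}=l_2)$ once we check the "number in the right half" is the only statistic that survives.

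The main work is to verify that one block $\sigma_{2m-1-t}\cdots\sigma_{m+1-t}\sigma_{m-t}$ implements exactly the "update at time $t$" described before the lemma: a cascade of truncated-geometric right jumps that either propagates to the next particle (when a particle jumps the maximal distance and lands adjacent to its right neighbor) or terminates, at which point all remaining particles to the right shift one step left. I would prove this by reading off the matrix entries of $\check{S}$ directly. Writing $\check{S}$ in the basis $e_i\otimes e_j$, one sees that applying $\sigma_a$ to two adjacent sites either swaps their occupations (with the probabilities $1$, $q^2$, etc. depending on the relative order/occupancy) or leaves them; tracking a single "active" particle being pushed rightward through successive applications $\sigma_{m-t},\sigma_{m-t+1},\ldots$ shows its displacement is geometric with parameter $q^2$, truncated by the position of the next particle, and that the $q^2(1-q^2)$-type weights are precisely the geometric tail probabilities. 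When the active particle stalls (fails to reach the next particle), the remaining $\sigma$'s in the block act trivially except to transport the configuration, producing the uniform one-step left shift of all particles further right; when it reaches the next particle, that particle becomes active and the cascade continues. This is the heart of the argument and the step I expect to be the main obstacle: it is essentially a careful bookkeeping of how a product of nearest-neighbor stochastic swaps, applied left-to-right, realizes the stated "cascade" dynamics, and one must be careful about boundary cases (particles starting adjacent, a particle reaching site $2m$, empty gaps).

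Having established that each block is one update step, the lemma follows: after $m$ blocks we have run the auxiliary process for $m$ updates from the initial configuration (which is exactly the "packed" configuration with $k_1$ particles at $\{1,\ldots,k_1\}$ and $k_2$ at $\{2m-k_2+1,\ldots,2m\}$), and the matrix entry $\langle e_{l_1}^{(m)}\otimes e_{l_2}^{(m)} \mid \Sigma_1 \mid e_{k_1}^{(m)}\otimes e_{k_2}^{(m)}\rangle$ equals the probability that the resulting configuration, projected back to $\mathrm{Sym}^m V\otimes\mathrm{Sym}^m V$ by counting occupancies of $\{1,\ldots,m\}$ and $\{m+1,\ldots,2m\}$, has $l_2$ particles in the right half — i.e. $\mathbb{P}(J_{m,k_1,k_2}=l_2)$. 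One small point to nail down is why the output is automatically supported on configurations that re-pack to a genuine $\mathrm{Sym}^m\otimes\mathrm{Sym}^m$ basis vector (equivalently, why only the count $l_2$, not the detailed final configuration, matters): this is because the last block's trailing transpositions, together with the idempotent structure coming from the symmetrized tensor factors, collapse any intermediate configuration to the packed one with the same halfwise counts, so the per-configuration probabilities sum correctly. With the conservation factor $1_{k_1+k_2=l_1+l_2}$ already noted, this completes the proof.
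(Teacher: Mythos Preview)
Your approach is essentially the paper's: interpret $\Sigma_1$ as the product of $m^2$ stochastic matrices $\check{S}$, so its entries are transition probabilities of the chain that applies the $\sigma$'s in order, and then argue that only the halfwise particle counts matter at the input and output. Two remarks on emphasis. First, what you call the ``main work'' --- verifying that each block realizes the truncated--geometric cascade --- is unnecessary for this lemma: in the paper the auxiliary process is \emph{defined} as applying $\sigma_{2m-1-t}\cdots\sigma_{m-t}$ via $\check{S}$, and $J_{m,k_1,k_2}$ is defined as the right--half count after $m$ such updates, so the cascade description is an informal aside, not part of the statement. Second, the paper handles the passage between $(\mathbb{C}^2)^{\otimes 2m}$ and $\mathrm{Sym}^m\otimes\mathrm{Sym}^m$ by invoking the fusion equation $\Sigma_1=(\text{product of }\sigma)\cdot P$ with $P$ the symmetrizer $\Phi$: this explains in one stroke why the fission map $\Lambda$ (input lift) may be chosen as the packed configuration and why the output depends only on $l_2$ (fusion $\Phi$ sums over all preimages). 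Your appeal to ``trailing transpositions together with the idempotent structure'' is reaching for the same fact but is less precise; citing the fusion identity directly would tighten that step.
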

\begin{proof}
As explained in \cite{crampe2020fused} and \cite{crampebax}, the action of each $\Sigma_i$ corresponds to an $R$--matrix. The $R$--matrix satisfies a fusion equation (see (20) of \cite{crampebax})
$$
\Sigma_1 =   \underbrace{ \sigma_{m}\ldots \sigma_1}_{m \text{ terms}} \cdots \underbrace{\sigma_{2m-2} \cdots  \sigma_{m}\sigma_{m-1}}_{m \text{ terms}}  \underbrace{\sigma_{2m-1} \cdots \sigma_{m+1}\sigma_m}_{m \text{ terms}} P,
$$
where $P$ is the symmetrizer from ${\mathbb{C}^2}^{\otimes 2m}$ to $\mathrm{Sym}^m \mathbb{C}^2 \otimes \mathrm{Sym}^m \mathbb{C}^2$. The symmetrizer is simply the map $\Phi$ which fuses adjacent sites; see the accompanying image. See also section 3.2 of \cite{KuanCMP}, or \cite{KuanSF}, where the $P$ is described as $\Phi$. The fusion equation allows the ``fission'' map $\Lambda$ to be arbitrary (note that there is no $P$ on the left), so for convenience we choose the $k_1$ particles to be farthest to the left and the $k_2$ particles to be farthest to the right. This gives the initial condition of the auxiliary process.The $m^2$ $\sigma$'s describe the update of the auxiliary process after $m$ time steps. The preimage of $(l_1,l_2)$ under $\Phi$ is simply all particle configurations where there are $l_2$ particles in the right half of the lattice. The probability of having exactly $l_2$ particles in the right half is just $\mathbb{P}(J_{m,k_1,k_2}=l_2)$. This completes the proof.

\begin{figure}\label{LP}
\begin{center}
\begin{tikzpicture}
\draw (2,2) circle (3pt);
\draw (4,2) circle (3pt);
\fill[black] (5,2) circle (3pt);
\fill[black] (1,2) circle (3pt);
\draw (3,2) circle (3pt);
\fill[black] (6,2) circle (3pt);
\draw [very thick](0.5,2) -- (6.5,2);
\draw (8,2) node (a) { $ $};
\draw (8,0) node (b) { $ $};
\draw (0,2) node (c) { $$};
\draw (0,0) node (d) { $$};
\draw (a) edge[out=-45,in=45,->, line width=0.5pt] (b);
\draw (d) edge[out=135,in=-135,->, line width=0.5pt] (c);
\draw (0.7,2) node {\Huge $[$};
\draw (3.3,2) node {\Huge $]$};
\draw (3.7,2) node {\Huge $[$};
\draw (6.3,2) node {\Huge $]$};
\draw (0.7,0) node {\Huge $[$};
\draw (3.3,0) node {\Huge $]$};
\draw (3.7,0) node {\Huge $[$};
\draw (6.3,0) node {\Huge $]$};
\draw [very thick](0.5,-0) -- (6.5,-0);
\draw (2,0.6) circle (3pt);
\draw (2,0.2) circle (3pt);
\fill[black] (2,-0.2) circle (3pt);

\draw (5,0.6) circle (3pt);
\fill[black] (5,0.2) circle (3pt);
\fill[black] (5,-0.2) circle (3pt);

\draw (9,1) node {$\Phi$};
\draw (-1,1) node {$\Lambda$};
\end{tikzpicture}
\end{center}
\end{figure}

\end{proof}

We return to a discussion of the auxiliary Markov process. Note that the first $m-k_1$ time updates are deterministic, with each update moving the block of $k_2$ particles one step to the left. Thus, it is equivalent to consider an initial condition with particles located at $\{1,2,\ldots,k_1\}$ and $\{m-k_2+k_1+1,\ldots,m+k_1\}$ with only $k_1$ time updates. We will proceed by induction on $k_1$. When $k_1=0$, then clearly $\mathbb{P}(J_{m,0,k_2}=0=1)$, which matches the jump rate. For the inductive step, note that the jump rates satisfy a recurrence relation:
\begin{lemma}\label{recur}
Let
$$
p_m(k_1,k_2;l_1,l_2):=\binom{k_1}{l_2}_q (q^{2(m-k_2)};q^{-2})_{k_1-l_2} q^{2(m-k_2-k_1+l_2)l_2}.
$$
Then
$$
p_m(k_1,k_2;l_1,l_2) := q^{2(m-k_2-k_1+l_2)} p_m(k_1-1,k_2;l_1,l_2-1) + (1-q^{2(m-k_2-k_1+l_2+1)}) p_m(k_1-1,k_2;l_1,l_2)
$$
\end{lemma}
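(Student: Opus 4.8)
The recurrence asserted in Lemma~\ref{recur} is a purely algebraic identity (it is the statement that drives the induction on $k_1$ in the preceding argument), so the plan is to verify it by expanding all three instances of $p_m$ and checking that the two terms on the right combine into the left. First I would set up the bookkeeping. Write $c := m-k_2-k_1+l_2$. Replacing $(k_1,l_2)$ by $(k_1-1,l_2-1)$ leaves the quantity $m-k_2-k_1+l_2$ unchanged, so $p_m(k_1-1,k_2;l_1,l_2-1)$ carries the same $c$, while $p_m(k_1-1,k_2;l_1,l_2)$ carries $c+1$. Likewise the length of the $q$--Pochhammer factor $(q^{2(m-k_2)};q^{-2})_{k_1-l_2}$ is unchanged under $(k_1,l_2)\mapsto(k_1-1,l_2-1)$ but drops by one, to $k_1-l_2-1$, under $(k_1,l_2)\mapsto(k_1-1,l_2)$; and its base $q^{2(m-k_2)}$ is the same in all three expressions since $k_2$ is held fixed throughout. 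Finally $p_m$ does not depend on $l_1$, so that index is an inert spectator.

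With this in hand, the point is that the two scalar coefficients on the right--hand side are precisely the ones needed to reconcile these mismatches. From the definition of the $q$--Pochhammer, $(q^{2(m-k_2)};q^{-2})_{k_1-l_2} = (1-q^{2(m-k_2-k_1+l_2+1)})\,(q^{2(m-k_2)};q^{-2})_{k_1-l_2-1}$, so multiplying $p_m(k_1-1,k_2;l_1,l_2)$ by $(1-q^{2(m-k_2-k_1+l_2+1)}) = (1-q^{2(c+1)})$ lengthens its Pochhammer factor back to $k_1-l_2$ terms. Similarly, multiplying $p_m(k_1-1,k_2;l_1,l_2-1)$ by $q^{2(m-k_2-k_1+l_2)} = q^{2c}$ upgrades its power of $q$ from $q^{2c(l_2-1)}$ to $q^{2cl_2}$. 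After these two substitutions both terms share the common factor $(q^{2(m-k_2)};q^{-2})_{k_1-l_2}\,q^{2cl_2}$, and the right--hand side collapses to
\[
(q^{2(m-k_2)};q^{-2})_{k_1-l_2}\, q^{2cl_2}\left(\binom{k_1-1}{l_2-1}_q + q^{2l_2}\binom{k_1-1}{l_2}_q\right).
\]

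It then remains to invoke the $q$--Pascal rule $\binom{k_1}{l_2}_q = \binom{k_1-1}{l_2-1}_q + q^{2l_2}\binom{k_1-1}{l_2}_q$ for the ($q^2$--deformed) Gaussian binomial coefficients, which holds including at the boundary values $l_2=0$ and $l_2=k_1$ under the conventions $\binom{n}{k}_q = 0$ for $k<0$ or $k>n$. This identifies the bracket with $\binom{k_1}{l_2}_q$ and hence the right--hand side with $p_m(k_1,k_2;l_1,l_2)$, completing the proof. The computation is elementary throughout; the only part requiring care is the first step, namely tracking how the Pochhammer length and the various $q$--exponents shift across the three terms so that the common factor can be extracted cleanly — everything after that is a one--line application of the $q$--binomial recursion.
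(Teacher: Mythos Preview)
Your proof is correct and follows essentially the same route as the paper: both expand the right--hand side, use the telescoping identity $(q^{2(m-k_2)};q^{-2})_{k_1-l_2} = (1-q^{2(m-k_2-k_1+l_2+1)})(q^{2(m-k_2)};q^{-2})_{k_1-l_2-1}$ to align the Pochhammer factors, pull out the common factor $(q^{2(m-k_2)};q^{-2})_{k_1-l_2}\,q^{2(m-k_2-k_1+l_2)l_2}$, and finish with the $q$--Pascal rule. Your introduction of the shorthand $c=m-k_2-k_1+l_2$ and the explicit tracking of how each piece shifts under $(k_1,l_2)\mapsto(k_1-1,l_2-1)$ versus $(k_1,l_2)\mapsto(k_1-1,l_2)$ makes the bookkeeping a bit more transparent than the paper's version, but the argument is the same.
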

\begin{proof}

Plugging in the expression for $p_m$, the right--hand--side is
\begin{multline*}
q^{2(m-k_2-k_1+l_2)}  \left\{ \begin{array}{c} k_1-1 \\ l_2-1 \end{array} \right\}_{q^2} (q^{2(m-k_2)};q^{-2})_{k_1-l_2} q^{2(m-k_2-k_1+l_2)(l_2-1)}  \\
+ (1-q^{2(m-k_2-k_1+l_2+1)}) \left\{ \begin{array}{c} k_1-1 \\ l_2 \end{array} \right\}_{q^2} (q^{2(m-k_2)};q^{-2})_{k_1-1-l_2}q^{2(m-k_2-k_1+l_2+1)l_2} .
\end{multline*}
First, note that
$$
(1-q^{2(m-k_2-k_1+l_2+1)}) (q^{2(m-k_2)};q^{-2})_{k_1-1-l_2} = (q^{2(m-k_2)};q^{-2})_{k_1-l_2},
$$
so the right--hand--side factors as 
$$
\left(  \left\{ \begin{array}{c} k_1-1 \\ l_2-1 \end{array} \right\}_{q^2} + q^{2l_2}\left\{ \begin{array}{c} k_1-1 \\ l_2 \end{array} \right\}_{q^2}   \right)(q^{2(m-k_2)};q^{-2})_{k_1-l_2}, q^{2(m-k_2-k_1+l_2)l_2} .
$$
Using the identity 
$$
 \left\{ \begin{array}{c} k_1-1 \\ l_2-1 \end{array} \right\}_{q^2} + q^{2l_2}\left\{ \begin{array}{c} k_1-1 \\ l_2 \end{array} \right\}_{q^2}  = \left\{ \begin{array}{c} k_1 \\ l_2 \end{array} \right\}_{q^2} 
$$
completes the proof of the lemma.
\end{proof}
We will show that the quantity $\mathbb{P}(J_{m,k_1,k_2}=l_2)$ satisfies the same recurrence relation. First, introduce the notation to let $y_1(t)<\ldots< y_{k_2}(t)$ denote the location of the block of $k_2$ particles at time $t$. So $y_1(0)=m-k_2+k_1+1$ up to $y_{k_2}(0)=m+k_1$. Note that since $y_{j+1}(t) \geq y_j(t)+1$ for all times $t$ and all $j\in \{1,\ldots,k_2\}$, thus we can conclude $J_{m,k_1,k_2} \geq y_1(k_1)-(m+1)+k_2$. In fact, the $y_j$ particles can never jump to the right, because the update terminates at lattice site $m_k+k_1-t$; so the reverse inequality $J_{m,k_1,k_2} \leq y_1(k_1)-(m+1)+k_2$ holds as well. Therefore, $J_{m,k_1,k_2}=y_1(k_1)-(m+1)+k_2$, so analyzing $J_{m_1,k_1,k_2}$ is equivalent to analyzing $y_1(k_1)$.

After each time update, the particle $y_1$ either jumps one step to the left or does not jump, so analyzing $J_{m,k_1,k_2}$ is equivalent to analyzing the times at which $y_1$ does not jump. For each $t \in \{1,2,\ldots,k_1\}$ let $\mathcal{S}(m,t,k_2) \subseteq \{1,\ldots,t\}$ denote the set of times at which $y_1$ does not jump. Note that this set only depends on the relative positions of the particles with each other, which does not depend on $k_1$, and therefore it is notationally sound to write $\mathcal{S}(m,t,k_2)$ with no dependence on $k_1$. We thus have that $J_{m,k_1,k_2}= \vert \mathcal{S}(m,k_1,k_2)\vert$.

To complete the inductive step, it suffices to show that
\begin{multline*}
\mathbb{P}(\vert \mathcal{S}(m,k_1,k_2)\vert = l_2) = q^{2(m-k_2-k_1+l_2)} \mathbb{P}(\vert \mathcal{S}(m,k_1-1,k_2)\vert = l_2-1)  \\
+ (1-q^{2(m-k_2-k_1+l_2+1)} ) \mathbb{P}(\vert \mathcal{S}(m,k_1-1,k_2)\vert = l_2) .
\end{multline*}
To do this, it suffices to show the conditional probabilities
\begin{align*}
\mathbb{P}(\vert \mathcal{S}(m,k_1,k_2)\vert = l_2 \big| \vert \mathcal{S}(m,k_1-1,k_2)\vert = l_2-1) &=  q^{2(m-k_2-k_1+l_2)} \\
\mathbb{P}(\vert \mathcal{S}(m,k_1,k_2)\vert = l_2 \big| \vert \mathcal{S}(m,k_1-1,k_2)\vert = l_2) &= 1-q^{2(m-k_2-k_1+l_2+1)} ,
\end{align*}
since $\{ \vert \mathcal{S}(m,k_1,k_2)\vert = l_2 \} \subseteq \{\vert \mathcal{S}(m,k_1-1,k_2)\vert = l_2-1)\} \cup \{\vert \mathcal{S}(m,k_1-1,k_2)\vert = l_2)\}.$ Note that once the first equality is shown, the second follows from 
\begin{align*}
&\mathbb{P}(\vert \mathcal{S}(m,k_1,k_2)\vert = l_2 \big| \vert \mathcal{S}(m,k_1-1,k_2)\vert = l_2) \\
&= 1- \mathbb{P}(\vert \mathcal{S}(m,k_1,k_2)\vert = l_2+1 \big| \vert \mathcal{S}(m,k_1-1,k_2)\vert = (l_2+1)-1)\\
&=  1-q^{2(m-k_2-k_1+l_2+1)}.
\end{align*}
To show this first inequality, note that after $k_1-1$ time updates, if the the set $\mathcal{S}$ has magnitude $l_2-1$, this means the $y_1$ particle has jumped $k_1-l_2$ times, so is located at $m-k_2+l_2+1$. In order for $y_1$ to not jump, we require every particle to jump as far to the right as possible. The update begins at lattice site $1$, where there is a particle, and ends at the lattice site $m-k_2+l_2$. There are $k_1-1$ particles located in between the two lattice sites, so the total distance jumped must be $(m-k_2+l_2-1)-(k_1-1)=m-k_2+l_2-k_1$, which has a probability $q^{2(m-k_2+l_2-k_1)}$. This completes the proof.

\section{Appendix}
\subsection{Examples}
Fix $q \in [0,1]$. Let $S$ denote the $4\times 4$ stochastic matrix acting on $\mathbb{R}^2 \otimes \mathbb{R}^2$ by 
$$
\left(
\begin{array}{cccc}
 1 & 0 & 0 & 0 \\
 0 & 1-q^2 & q^2 & 0 \\
 0 & 1 & 0 & 0 \\
 0 & 0 & 0 & 1 \\
\end{array}
\right).
$$
The product $S_{23}S_{12}S_{34}S_{23}-\mathrm{Id}_{16}$ is the $16\times 16$ ``fused'' matrix acting on $\mathbb{R}^2 \otimes \mathbb{R}^2 \otimes \mathbb{R}^2 \otimes \mathbb{R}^2 $ by 
$$
\tiny
\left(
\begin{array}{cccccccccccccccc}
 0 & 0 & 0 & 0 & 0 & 0 & 0 & 0 & 0 & 0 & 0 & 0 & 0 & 0 & 0 & 0 \\
 0 & -q^2 & q^2-q^4 & 0 & q^4 & 0 & 0 & 0 & 0 & 0 & 0 & 0 & 0 & 0 & 0 & 0 \\
 0 & 1-q^2 & -q^4+q^2-1 & 0 & 0 & 0 & 0 & 0 & q^4 & 0 & 0 & 0 & 0 & 0 & 0 & 0 \\
 0 & 0 & 0 & q^2 \left(q^4-q^2-1\right) & 0 & q^2-q^4 & q^4-q^6 & 0 & 0 & q^4-q^6 &
   q^6-q^8 & 0 & q^8 & 0 & 0 & 0 \\
 0 & 1 & 0 & 0 & -1 & 0 & 0 & 0 & 0 & 0 & 0 & 0 & 0 & 0 & 0 & 0 \\
 0 & 0 & 0 & 1-q^2 & 0 & q^2-1 & 0 & 0 & 0 & 0 & 0 & 0 & 0 & 0 & 0 & 0 \\
 0 & 0 & 0 & 1-q^2 & 0 & 0 & -1 & 0 & 0 & q^2 & 0 & 0 & 0 & 0 & 0 & 0 \\
 0 & 0 & 0 & 0 & 0 & 0 & 0 & -q^2 & 0 & 0 & 0 & q^2-q^4 & 0 & q^4 & 0 & 0 \\
 0 & 0 & 1 & 0 & 0 & 0 & 0 & 0 & -1 & 0 & 0 & 0 & 0 & 0 & 0 & 0 \\
 0 & 0 & 0 & 1-q^2 & 0 & 0 & q^2 & 0 & 0 & -1 & 0 & 0 & 0 & 0 & 0 & 0 \\
 0 & 0 & 0 & 1-q^2 & 0 & 0 & 0 & 0 & 0 & 0 & q^2-1 & 0 & 0 & 0 & 0 & 0 \\
 0 & 0 & 0 & 0 & 0 & 0 & 0 & 1-q^2 & 0 & 0 & 0 & -q^4+q^2-1 & 0 & 0 & q^4 & 0 \\
 0 & 0 & 0 & 1 & 0 & 0 & 0 & 0 & 0 & 0 & 0 & 0 & -1 & 0 & 0 & 0 \\
 0 & 0 & 0 & 0 & 0 & 0 & 0 & 1 & 0 & 0 & 0 & 0 & 0 & -1 & 0 & 0 \\
 0 & 0 & 0 & 0 & 0 & 0 & 0 & 0 & 0 & 0 & 0 & 1 & 0 & 0 & -1 & 0 \\
 0 & 0 & 0 & 0 & 0 & 0 & 0 & 0 & 0 & 0 & 0 & 0 & 0 & 0 & 0 & 0 \\
\end{array}
\right).
$$
This matrix, denoted by $L$, is the generator of a continuous time Markov chain. 
For some other parameter $s\in [0,1]$, define the Markov operator $\Lambda$ from $\mathrm{Sym}^2\mathbb{R}^2 \otimes \mathrm{Sym}^2\mathbb{R}^2 $ to $\mathbb{R}^2 \otimes \mathbb{R}^2 \otimes \mathbb{R}^2 \otimes \mathbb{R}^2 $ by 
$$
\left(
\begin{array}{cccccccccccccccc}
 1 & 0 & 0 & 0 & 0 & 0 & 0 & 0 & 0 & 0 & 0 & 0 & 0 & 0 & 0 & 0 \\
 0 & \frac{1}{s+1} & \frac{s}{s+1} & 0 & 0 & 0 & 0 & 0 & 0 & 0 & 0 & 0 & 0 & 0 & 0 &
   0 \\
 0 & 0 & 0 & 1 & 0 & 0 & 0 & 0 & 0 & 0 & 0 & 0 & 0 & 0 & 0 & 0 \\
 0 & 0 & 0 & 0 & \frac{1}{s+1} & 0 & 0 & 0 & \frac{s}{s+1} & 0 & 0 & 0 & 0 & 0 & 0 &
   0 \\
 0 & 0 & 0 & 0 & 0 & \frac{1}{(s+1)^2} & \frac{s}{(s+1)^2} & 0 & 0 &
   \frac{s}{(s+1)^2} & \frac{s^2}{(s+1)^2} & 0 & 0 & 0 & 0 & 0 \\
 0 & 0 & 0 & 0 & 0 & 0 & 0 & \frac{1}{s+1} & 0 & 0 & 0 & \frac{s}{s+1} & 0 & 0 & 0 &
   0 \\
 0 & 0 & 0 & 0 & 0 & 0 & 0 & 0 & 0 & 0 & 0 & 0 & 1 & 0 & 0 & 0 \\
 0 & 0 & 0 & 0 & 0 & 0 & 0 & 0 & 0 & 0 & 0 & 0 & 0 & \frac{1}{s+1} & \frac{s}{s+1} &
   0 \\
 0 & 0 & 0 & 0 & 0 & 0 & 0 & 0 & 0 & 0 & 0 & 0 & 0 & 0 & 0 & 1 \\
\end{array}
\right)
$$
and the deterministic projection $\Phi$ from $\mathbb{R}^2 \otimes \mathbb{R}^2 \otimes \mathbb{R}^2 \otimes \mathbb{R}^2 $ to $\mathrm{Sym}^2\mathbb{R}^2 \otimes \mathrm{Sym}^2\mathbb{R}^2 $ by 
$$
\left(
\begin{array}{ccccccccc}
 1 & 0 & 0 & 0 & 0 & 0 & 0 & 0 & 0 \\
 0 & 1 & 0 & 0 & 0 & 0 & 0 & 0 & 0 \\
 0 & 1 & 0 & 0 & 0 & 0 & 0 & 0 & 0 \\
 0 & 0 & 1 & 0 & 0 & 0 & 0 & 0 & 0 \\
 0 & 0 & 0 & 1 & 0 & 0 & 0 & 0 & 0 \\
 0 & 0 & 0 & 0 & 1 & 0 & 0 & 0 & 0 \\
 0 & 0 & 0 & 0 & 1 & 0 & 0 & 0 & 0 \\
 0 & 0 & 0 & 0 & 0 & 1 & 0 & 0 & 0 \\
 0 & 0 & 0 & 1 & 0 & 0 & 0 & 0 & 0 \\
 0 & 0 & 0 & 0 & 1 & 0 & 0 & 0 & 0 \\
 0 & 0 & 0 & 0 & 1 & 0 & 0 & 0 & 0 \\
 0 & 0 & 0 & 0 & 0 & 1 & 0 & 0 & 0 \\
 0 & 0 & 0 & 0 & 0 & 0 & 1 & 0 & 0 \\
 0 & 0 & 0 & 0 & 0 & 0 & 0 & 1 & 0 \\
 0 & 0 & 0 & 0 & 0 & 0 & 0 & 1 & 0 \\
 0 & 0 & 0 & 0 & 0 & 0 & 0 & 0 & 1 \\
\end{array}
\right).
$$
One can compute that $\Lambda L \Phi$ acts on $\mathrm{Sym}^2 \mathbb{R}^2 \otimes \mathrm{Sym}^2 \mathbb{R}^2$ as 
$$
\left(
\begin{array}{ccccccccc}
 0 & 0 & 0 & 0 & 0 & 0 & 0 & 0 & 0 \\
 0 & -q^4 & 0 & q^4 & 0 & 0 & 0 & 0 & 0 \\
 0 & 0 & q^2 \left(q^4-q^2-1\right) & 0 & \left(1-q^2\right) \left(q^3+q\right)^2 &
   0 & q^8 & 0 & 0 \\
 0 & 1 & 0 & -1 & 0 & 0 & 0 & 0 & 0 \\
 0 & 0 & 1-q^2 & 0 & q^2-1 & 0 & 0 & 0 & 0 \\
 0 & 0 & 0 & 0 & 0 & -q^4 & 0 & q^4 & 0 \\
 0 & 0 & 1 & 0 & 0 & 0 & -1 & 0 & 0 \\
 0 & 0 & 0 & 0 & 0 & 1 & 0 & -1 & 0 \\
 0 & 0 & 0 & 0 & 0 & 0 & 0 & 0 & 0 \\
\end{array}
\right),
$$
which has no dependence on $s$. Furthermore, if $G$ is defined to be the diagonal matrix with entries given by $(1,1,1,q^{-2},q^{-1}(1+q^2)^{-1},1,q^{-4},q^{-2},1)$, then 
$$
(\Lambda L \Phi)^* = G^{-2} \cdot \Lambda L \Phi \cdot G^2,
$$
where the $ ^*$ denotes transpose. This means that $G^{-2}$ gives the reversible measure for the continuous--time Markov process generated by $\Lambda L \Phi$. These are the same reversible measures as in Proposition 2.6 of \cite{KIMRN}.

\bibliographystyle{alpha}
\bibliography{SchurWeylNotes}

\end{document}